 \newtheoremstyle{mydefinition}{20pt}{20pt}{}{}{\sffamily}{.}{.5em}{}
 \theoremstyle{mydefinition}
  \newtheorem{defn}{Definition}[section]
  \newtheorem{eg}[defn]{Example}
 \newtheoremstyle{myplain}{20pt}{20pt}{\itshape}{}{\sffamily}{.}{.5em}{}
 \theoremstyle{myplain}
  \newtheorem{thm}[defn]{Theorem}
  \newtheorem{lem}[defn]{Lemma}
  \newtheorem{cor}[defn]{Corollary}
 \newtheoremstyle{myremark}{20pt}{20pt}{}{}{\sffamily}{.}{.5em}{}
 \theoremstyle{myremark}
  \newtheorem{rmk}[defn]{Remark}
  \newtheorem*{ackn}{Acknowledgment}
 \renewcommand{\bf}[1]{\textbf{#1}}
 \renewcommand{\sc}[1]{\textsc{#1}}
 \renewcommand{\sf}[1]{\textsf{#1}}
 \renewcommand{\tt}[1]{\texttt{#1}}
 \newcommand{\mbb}[1]{\mathbb{#1}}
 \newcommand{\mcl}[1]{\mathcal{#1}}
 \newcommand{\msc}[1]{\mathscr{#1}}
 \newcommand{\ol}[1]{\overline{#1}}
 \newcommand{\wtilde}[1]{\widetilde{#1}}
 \newcommand{\what}[1]{\widehat{#1}}
 \newcommand{\abs}[1]{\left\lvert#1\right\rvert}
 \newcommand{\norm}[1]{\left\lVert#1\right\rVert}
 \newcommand{\bnorm}[1]{\bigl\lVert#1\bigr\rVert}
 \newcommand{\snorm}[1]{\norm{\smash{#1}}}
 \newcommand{\B}[1]{\msc{B}({#1})}
 \newcommand{\ip}[1]{\langle#1\rangle}
 \newcommand{\bip}[1]{\bigl\langle#1\bigr\rangle}
 \newcommand{\clran}[1]{\ol{\sf{ran}}(#1)}
 \newcommand{\mscriptsize}[1]{{\setlength{\arraycolsep}{.3ex}\text{\scriptsize$#1$}}}
 \newcommand{\Matrix}[1]{\begin{bmatrix}#1\end{bmatrix}}
 \newcommand{\sMatrix}[1]{\mscriptsize{\Matrix{#1}}}
 \DeclareMathOperator{\lspan}{\sf{span}}
 \DeclareMathOperator{\cspan}{\ol{\lspan}}
 \numberwithin{equation}{section}
 \setlist[enumerate]{font=\upshape,noitemsep, topsep=0pt} 
 \setlist[itemize]{noitemsep, topsep=0pt}
\begin{document}
\title{On a  generalization of Ando's dilation theorem}

\author{Nirupama Mallick}
\thanks{\sc{Nirupama Mallick}, Chennai Mathematical Institute, H1, SIPCOT IT Park, Kelambakkam, Siruseri, Tamilnadu- 603103, India. Email: \tt{niru.mallick@gmail.com}}

\author{K. Sumesh}
\thanks{\sc{K. Sumesh}, Indian Institute of Technology Madras, Sardar Patel Road, Opposite to C. L.R.I, Adyar, Chennai-600036, India. Email: {\tt{sumeshkpl@gmail.com, sumeshkpl@iitm.ac.in}}}

\date{\today}

\subjclass[2010]{47A20}
\keywords{Isometric dilation, unitary dilation, co-isometric extension, commutant lifting, Ando's dilation theorem.}

\maketitle

\begin{abstract}
 We introduce the notion of $Q$-commuting operators which includes  commuting operators. We prove a generalized version of the commutant lifting theorem and Ando's dilation theorem in the context of $Q$-commuting operators. 
 
%
\end{abstract}

\section{Introduction}

 Throughout, $\mcl{H}$ and $\mcl{K}$ denote complex Hilbert spaces and $\B{\cdot}$ denotes the space of all bounded linear maps.  Suppose $\mcl{H}_i$ and $\mcl{K}_i$ are Hilbert spaces such that  $\mcl{H}_i\subseteq\mcl{K}_i,~i=1,2$. Given $T\in\B{\mcl{H}_1,\mcl{H}_2}$ a bounded linear operator $S\in\B{\mcl{K}_1, \mcl{K}_2}$ is said to be an 
 \begin{enumerate}[label=(\roman*)]
     \item \emph{extension} of $T$ if $S(h)=T(h)$ for all $h\in\mcl{H}_1$. (In such cases we write $S|_{\mcl{H}_1}=T$.)  
     \item \emph{lifting} of $T$ if $S(\mcl{H}_1^\perp)\subseteq\mcl{H}_2^\perp$ and $T=P_{\mcl{H}_2}S|_{\mcl{H}_1}$ (equivalently $S^*|_{\mcl{H}_2}=T^*$).
 \end{enumerate}
 With respect to the decompositions $\mcl{K}_1=\mcl{H}_1\oplus\mcl{H}_1^\perp$ and $\mcl{K}_2=\mcl{H}_2\oplus\mcl{H}_2^\perp$, in $(i)$ and $(ii)$ the operator $S$ has the matrix form  $S=\sMatrix{T&\ast\\ 0&\ast}$ and $S=\sMatrix{T& 0\\ \ast&\ast}$, respectively. Note that $S$ is a lifting of $T$ if and only if $S^*$ is an extension of $T^*$. 
 
 An operator $S\in\B{\mcl{K}}$ is said to be a \emph{dilation} of $T\in\B{\mcl{H}}$ if $\mcl{H}\subseteq\mcl{K}$ and $T^n=P_{\mcl{H}}S^n|_{\mcl{H}}$ for all $n\geq 0$, where $P_{\mcl{H}}\in\B{\mcl{K}}$ is the orthogonal projection onto $\mcl{H}$. In such case, with respect to the decomposition $\mcl{K}=\mcl{H}\oplus\mcl{H}^\perp$, the operator  $S^n$ has the matrix form $S^n=\sMatrix{T^n&\ast\\ \ast&\ast}$ for all $n\geq 0$.  Clearly extension and lifting of $T\in\B{\mcl{H}}$ are dilations.  It is well known that (see \cite{NaFo68,NaFo70}) given any contraction $T\in\B{\mcl{H}}$ there exists a Hilbert space $\mcl{K}\supseteq\mcl{H}$ and an isometry $V\in\B{\mcl{K}}$ such that $V$ is a dilation of  $T$. Such a pair $(V,\mcl{K})$ is called an \emph{isometric dilation} of $T$. An isometric dilation $(V,\mcl{K})$ is said to be \emph{minimal} if 
 \begin{align}\label{eq-MID}
      \mcl{K}=\cspan\{V^n(\mcl{H}): n\geq 0\}.
 \end{align}
 Minimal isometric dilations are unique upto unitary equivalence in the sense that if $(V,\mcl{K})$ and  $(V',\mcl{K}')$ are two minimal isometric dilations, then there exists a unitary $U:\mcl{K}\to\mcl{K}'$ such that $U|_{\mcl{H}}=I_{\mcl{H}}$ and $UV=V'U$.  Sz-Nagy proved (\cite{Nagy53,NaFo70}) that given a contraction $T\in\B{\mcl{H}}$ there exists a Hilbert space $\mcl{K}$ and a unitary $U\in\B{\mcl{K}}$ such that $U$ is a dilation of $T$. Such a pair $(U,\mcl{K})$ is called a \emph{unitary dilation} of $T$. Moreover, such a dilation is unique (up to unitary equivalence) if it is \emph{minimal}, in the sense that 
 \begin{align}\label{eq-MUD}
      \mcl{K}=\cspan\{U^n(\mcl{H}): n\in\mbb{Z}\}.
 \end{align} 
 In \cite{Sch55} Schaffer  gave an elementary proof of the existence of minimal unitary dilation of a contraction.
 
 Given a contraction $T\in\B{\mcl{H}}$ there always exists a Hilbert space $\mcl{K}\supseteq\mcl{H}$ and a co-isometry $W\in\B{\mcl{K}}$ which extends $T$, and we call $(W,\mcl{K})$ a \emph{co-isometric extension} of $T$. In fact, by considering the lower right-hand corner of the matrix form of the Schaffer's construction (see \cite{Sch55,NaFo70}),  one can get a co-isometric extension $(W,\mcl{K})$ of $T$  which is \emph{minimal} in the sense that  
 \begin{align}
   \mcl{K}=\cspan\{W^{*n}(\mcl{H}): n\geq 0\}.  
 \end{align}
 Note that if $(V^*,\mcl{K})$ is a minimal co-isometric extension of $T^*$, then $(V,\mcl{K})$ is an isometric lifting and hence a dilation of $T$ which is \emph{minimal} in the sense that \eqref{eq-MID} holds. Now from the uniqueness property, it follows that $\mcl{\mcl{H}^\perp}$ is invariant for every minimal isometric dilation of $T\in\B{\mcl{H}}$. Thus, a pair $(V,\mcl{K})$ is a minimal isometric dilation of $T$ if and only if $(V,\mcl{K})$ is a minimal isometric lifting of $T$ if and only if $(V^*,\mcl{K})$ is a minimal co-isometric extension of $T^*$. 
 
 Ando (\cite{Ando63}) proved that given any two  contractions $T_1,T_2\in\B{\mcl{H}}$ which are commuting (i.e., $T_1T_2=T_2T_1$) there exists a Hilbert space $\mcl{K}_0\supseteq\mcl{H}$ and commuting isometries $V_1,V_2\in\B{\mcl{K}_0}$ such that $$T_1^nT_2^m=P_\mcl{H}V_1^nV_2^m|_{\mcl{H}}$$ for all $n,m\geq 0$. In fact, $V_i$ can be chosen to be a lifting of $T_i, i=1,2$. Further, using Ito's theorem \cite{Ito58} he concluded that there exists a Hilbert space  $\mcl{K}\supseteq\mcl{H}$ and commuting unitary operators $U_1,U_2\in\B{\mcl{K}}$ such that $$T_1^nT_2^m=P_\mcl{H}U_1^nU_2^m|_{\mcl{H}}$$ for all $n,m\geq 0$. This is known as the \emph{Ando's dilation} theorem.
 
 Let $T_i\in\B{\mcl{H}_i}$ be a  contraction with isometric lifting $(V_i,\mcl{K}_i)$ (respectively co-isometric extension $(W_i,\mcl{K}_i))$, ~$i=1,2$.  Suppose $X\in\B{\mcl{H}_1,\mcl{H}_2}$  intertwines $T_1$ and $ T_2$, i.e., $XT_1=T_2X$. Then, due to Sz-Nagy and Foias (\cite{NaFo68},\cite{NaFo70}) there exists a norm-preserving lifting (respectively extension) $Y\in\B{\mcl{K}_1,\mcl{K}_2}$ of $X$ which intertwine $V_1$ and $V_2$ (respectively $W_1$ and $W_2$). This result is called \emph{intertwining lifting} (respectively \emph{intertwining co-extension}) theorem.  The case when $T_1=T_2$ and $V_1=V_2$ (respectively $W_1=W_2$) is known as \emph{commutant lifting} (respectively \emph{commutant co-extension}) theorem. 
 
 One may ask how  these dilation theorems of commuting pair of contractions can be  generalized to the setting of noncommuting pair of contractions $T_1,T_2$? In \cite{Seb94a} Sebestyen proved analogues of commutant lifting theorem and Ando's dilation theorem for \emph{anticommuting} pair (i.e., $T_2T_1=-T_1T_2$) of contractions. In \cite{DN18} Keshari and Mallick considered \emph{$q$-commuting} operators  (i.e., $T_2T_1=qT_1T_2$) where $q\in\mbb{T}=\{z\in\mbb{C}: \abs{z}=1\}$. They proved a generalized version of the commutant lifting theorem, intertwining lifting theorem and Ando's dilation theorem in the context of $q$-commuting contractions, and called them as \emph{$q$-commutant lifting} theorem, \emph{$q$-intertwining lifting} theorem and \emph{$q$-commutant dilation} theorem respectively. In this article we consider operators $T_1$ and $T_2$ which are \emph{$Q$-commuting} i.e.,  $T_2T_1$ equals either $QT_1T_2, T_1QT_2$ or $T_1T_2Q$ for some bounded operator $Q$. Our main aim is to prove an analogue of the commutant lifting theorem and Ando's dilation theorem  to the setting of $Q$-commuting operators.  As a first step we characterize (Theorem \ref{thm-CLT}) $Q$-commutants of a contraction $T\in\B{\mcl{H}}$ in terms of $\ol{Q}$-commutants of its minimal isometric dilation $(V,\mcl{K})$, where $\ol{Q}=Q\oplus Q'\in\B{\mcl{K}}$ with $Q'\in\B{\mcl{H}^\perp}$. This is a generalized version of   ($q$-)commutant lifting theorem (\cite{DN18,NaFo68}). The proof uses Schaffer construction. Further using ideas from \cite{DMP68} we prove generalized versions (see Theorem \ref{thm-CIL}, \ref{thm-unitary-CIT}) of ($q$-)intertwining lifting theorem. In Theorem \ref{thm-unitary-CDT} we characterize $Q$-commutants of a contraction in terms of $\ol{Q}$-commutants of its minimal unitary  dilation. Finally, we prove our main theorems that  $Q$-commuting contractions can be dilated into $\ol{Q}$-commuting isometries (Theorem \ref{thm-Q-CID}) and further into $\ol{Q}$-commuting unitaries (Theorem \ref{thm-main}). These results generalize Ando's dilation theorem and $q$-commutant dilation theorem.  The proofs consist of standard dilation theoretic arguments.

\section{Main results}

 Suppose $\mcl{H}$ and $\mcl{K}$ are Hilbert spaces such that $\mcl{H}\subseteq\mcl{K}$. Given any $Q\in\B{\mcl{H}}$ we let $\ol{Q}_{\mcl{K}}$ (or simply $\ol{Q}$) denotes any bounded operator on $\mcl{K}$ such that $\mcl{H}$ is a reducing subspace for $\ol{Q}$ and $\ol{Q}|_{\mcl{H}}=Q$. Note that $(Q\oplus q I_{\mcl{H}^\perp})\in\B{\mcl{K}}$ is an example for such an operator $\ol{Q}$ for every $q\in\mbb{C}$. If $Q$ is a contraction or (co-)isometry or unitary, then we require $\ol{Q}$ also to be a contraction or (co-)isometry or unitary, respectively.  
   
\begin{defn}\label{defn-Q-comm}
 Given $Q\in\B{\mcl{H}}$ two operators $T_1,T_2\in\B{\mcl{H}}$ are said to be \emph{$Q$-commuting} if one of the following happens: 
 \begin{align}\label{eq-Q-cmtng}
      T_2T_1=QT_1T_2
      \mbox{ or }
      T_2T_1=T_1QT_2
      \mbox{ or }
      T_2T_1=T_1T_2Q.
 \end{align} 
 If $Q=I_\mcl{H}$ (respectively $Q=-I_{\mcl{H}}$), then $Q$-commuting means commuting (respectively anti-commuting). 
\end{defn}
 
\begin{eg}
 Let $T_1=\sMatrix{0&0\\ 1&1}$ and $T_2=\sMatrix{-2&0\\1&1}$ in $M_2(\mbb{C})$. Note that $T_1,T_2$ are not commuting. In fact, there does not exists any $q\in\mbb{C}$ such that $T_2T_1=qT_1T_2$. But $Q=\sMatrix{-1&0\\0&1}$ and  $Q'=\sMatrix{0 & 0\\ 0& 1}$ in $M_2(\mbb{C})$ are such that  $T_2T_1=T_1T_2Q=T_1Q'T_2$. Note that there is no $Q\in M_2(\mbb{C})$ such that $T_2T_1=QT_1T_2$. 
\end{eg}

\begin{eg}
 Suppose $L,R,Q,Q'\in\B{\ell^2}$ are the linear operators given by 
 \begin{align*}
      L(x_1,x_2,x_3,\cdots) &:= (x_2, x_3,x_4,\cdots)\\
      R(x_1,x_2,x_3,\cdots) &:=(0,x_1,x_2,x_3,\cdots)\\
      Q(x_1,x_2,x_3,\cdots) &:= (0, x_2, x_3,x_4,\cdots)\\
      Q'(x_1,x_2,x_3,\cdots) &:= (0, 0, x_3,x_4,\cdots)
 \end{align*}
 Clearly $qLR\neq RL$ for all $q\in\mbb{C}$.  Note that $RL=QLR=LQ'R=LRQ$. 
\end{eg}

\begin{eg}
 Suppose $T_1=\sMatrix{0&1\\0&0}$ and $T_2=\sMatrix{0&0\\2&0}$ in $M_2(\mbb{C})$. Note that there does not exists any $Q\in M_2(\mbb{C})$ such that $T_1$ and $T_2$ are $Q$-commuting.  
\end{eg}

 The above example shows that given two operators $S,T\in\B{\mcl{H}}$, there may not exist always an operator $Q\in\B{\mcl{H}}$ such that $S$ and $T$ are $Q$-commuting. However, the next Lemma says that  given two operators $T,Q\in\B{\mcl{H}}$, under some suitable conditions, there always exists an operator  $S\in\B{\mcl{H}}$ such that $S$ and $T$ are $Q$-commuting. This is a generalization of \cite[Lemma 3.5]{DN18}. Recall that $T\in\B{\mcl{H}}$ is called a pure co-isometry if $TT^*=I$ and $T^{\ast n}\to 0$ in the strong operator topology.

\begin{lem}
 Suppose $T\in\B{\mcl{H}}$ is a pure co-isometry and $Q\in\B{\mcl{H}}$ is an isometry. If $T^*(\mcl{H})$ is invariant for $Q$, then there exists a co-isometry $S\in\B{\mcl{H}}$ such that $TS=STQ$.
\end{lem}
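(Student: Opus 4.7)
The plan is to exploit the Wold decomposition of the pure isometry $T^*$ and generalize the explicit construction used for the scalar $q$-commuting case in \cite[Lemma 3.5]{DN18}, where for $Q=qI$ with $q\in\mbb{T}$ one takes $S=\bigoplus_{n\geq 0}q^{n}I_{\mcl{L}_n}$, diagonal in the wandering-subspace decomposition.

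First I would set $\mcl{L}:=\ker T=\mcl{H}\ominus T^*(\mcl{H})$ and $\mcl{L}_n:=T^{*n}(\mcl{L})$ for $n\geq 0$. Since $T$ is a pure co-isometry, $T^*$ is a pure isometry, so $\mcl{H}=\bigoplus_{n\geq 0}\mcl{L}_n$ and $T$ acts as the backward shift sending $T^{*n}\ell$ to $T^{*(n-1)}\ell$ (with $\mcl{L}_0$ going to $0$). Writing $\mcl{M}:=T^*(\mcl{H})=\bigoplus_{n\geq 1}\mcl{L}_n$, the invariance hypothesis $Q\mcl{M}\subseteq\mcl{M}$ puts $Q$ into block-lower-triangular form $Q=\sMatrix{A & 0 \\ B & C}$ with respect to $\mcl{H}=\mcl{L}\oplus\mcl{M}$; the isometry condition $Q^*Q=I$ then yields $A^*A+B^*B=I_\mcl{L}$, $B^*C=0$, and $C^*C=I_\mcl{M}$, so in particular $C:=Q|_\mcl{M}$ is itself an isometry on $\mcl{M}$.

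A preparatory observation is that $\hat{C}:=TQT^*\in\B{\mcl{H}}$ is an isometry on $\mcl{H}$: since $QT^*\mcl{H}\subseteq\mcl{M}$, one has $P_\mcl{M}QT^*=QT^*$, whence $\hat{C}^*\hat{C}=TQ^*(T^*T)QT^*=TQ^*P_\mcl{M}QT^*=TQ^*QT^*=TT^*=I_\mcl{H}$. Under the Hilbert-space isomorphism $V:=T|_\mcl{M}:\mcl{M}\to\mcl{H}$, the isometry $\hat{C}$ is simply the conjugate $VCV^{-1}$ of $C$, so it is the natural ambient version of $C$ on the full space $\mcl{H}$. The key propagation identity $TQT^{*n}=\hat{C}\,T^{*(n-1)}$ for $n\geq 1$ then controls how the target equation $TS=STQ$ descends by one level in the shift.

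I would then define $S$ level-by-level on the wandering subspaces $\mcl{L}_n$ using iterates of $\hat{C}$, calibrated so that the formula reduces to $\bigoplus q^{n}I_{\mcl{L}_n}$ when $Q=qI$. With this $S$, the intertwining $TS=STQ$ is verified one wandering subspace at a time by repeated application of the propagation identity, while the co-isometry property $SS^*=I$ follows from $\hat{C}^*\hat{C}=I$ together with the orthogonal decomposition $\mcl{H}=\bigoplus_n\mcl{L}_n$ in a telescoping manner. The main obstacle is the off-diagonal block $B$: when $B\neq 0$, $Q$ carries part of $\mcl{L}$ into $\mcl{M}$, and these ``mixing'' contributions must cancel in the identity $TS=STQ$. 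The orthogonality relation $B^*C=0$ forced by the hypothesis ($Q$ isometric and preserving $T^*\mcl{H}$) is precisely what makes these cancellations go through, and it is here that the hypothesis on $Q$ does its essential work.
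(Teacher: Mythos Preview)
Your setup via the Wold decomposition of $T^*$ is exactly right, and your observation that $\hat C=TQT^*$ is an isometry is correct and useful. But the proposal has a genuine gap: you never actually define $S$. Saying you will build it ``level-by-level on $\mcl{L}_n$ using iterates of $\hat C$'' is not enough, and in fact this plan runs into trouble. Neither $Q$ nor $\hat C$ respects the grading $\mcl{H}=\bigoplus_n\mcl{L}_n$: for $\ell\in\mcl{L}$ the vector $\hat C\,T^{*(n-1)}\ell$ generally spreads over several $\mcl{L}_m$'s, so the right-hand side $STQ(T^{*n}\ell)=S\hat C\,T^{*(n-1)}\ell$ cannot be evaluated once $S$ is only specified on each $\mcl{L}_n$ separately. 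The ``cancellations via $B^*C=0$'' you anticipate are a symptom of this: you are fighting the mixing rather than avoiding it.

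The paper sidesteps all of this by constructing the \emph{adjoint} $S_0=S^*$ directly as an isometry. The key observation (equivalent to your $Q\mcl{M}\subseteq\mcl{M}$, in adjoint form $Q^*\mcl{L}\subseteq\mcl{L}$) is that the family $\{(QT^*)^n\mcl{L}\}_{n\ge 0}$ is mutually orthogonal; this is a one-line check using $T^*\mcl{H}\perp\mcl{L}$ and $Q^*\mcl{L}\subseteq\mcl{L}$. One then sets
\[
S_0\Big(\sum_{n\ge 0}T^{*n}w_n\Big)=\sum_{n\ge 0}(QT^*)^{\,n+1}w_n,\qquad w_n\in\mcl{L},
\]
which is immediately a well-defined isometry by that orthogonality, and the intertwining $S_0T^*=QT^*S_0$ is a direct computation with no cancellations needed. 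Taking adjoints gives the co-isometry $S=S_0^*$ with the desired relation. So the missing idea in your sketch is to work with $QT^*$ rather than $\hat C=TQT^*$, prove the orthogonality of its iterates on $\mcl{L}$, and build $S^*$ rather than $S$.
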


\begin{proof}
 Since $T^*$ is an isometry $\mcl{W}=(T^*\mcl{H})^\perp$ is a wandering subspace for $T^*$, i.e., $T^{\ast m}(\mcl{W})\perp T^{\ast n}(\mcl{W})$ for all $m\neq n\in\mbb{N}$. Moreover, since $T$ is pure co-isometry $\mcl{H}=\bigoplus_{n=0}^\infty T^{\ast n}(\mcl{W})$. As $T^*(\mcl{H})\subseteq\mcl{W}^\perp$  and $Q^*(\mcl{W})\subseteq\mcl{W}$, for $m<n$ and $w,w'\in\mcl{W}$ we have 
\begin{align*}
  \bip{(QT^*)^n w,(QT^*)^mw'}
     =\bip{(QT^*)^{n-m} w,w'}
     =\bip{T^*(QT^*)^{n-m-1} w,Q^*w'}
     =0.
\end{align*} 
Thus $(QT^*)^m(\mcl{W})\perp (QT^*)^n(\mcl{W})$ for all $m\neq n\in\mbb{N}\cup\{0\}$.  Define $S_0:\mcl{H}\to\mcl{H}$ by
  $$S_0(\sum_{n=0}^\infty T^{\ast n}w_n)=\sum_{n=0}^\infty (QT^*)^{n+1}w_n.$$ Then,  
 \begin{align*}
   \bnorm{S_0(\sum_{n\geq 0} T^{\ast n}w_n)}^2
        &=\sum_{n\geq 0}\sum_{m\geq 0}\bip{(QT^*)^{n+1}w_n,(QT^*)^{m+1}w_m} \\
        &=\sum_{n\geq 0}\bip{(QT^*)^{n+1}w_n,(QT^*)^{n+1}w_n} \\
        &=\sum_{n\geq 0}\bip{w_n,w_n} \\   
        &=\sum_{n\geq 0}\bip{T^{*n}w_n,T^{*n}w_n} \\
        &=\sum_{n\geq 0}\sum_{m\geq 0}\bip{T^{*n}w_n,T^{*m}w_m} \\  
        &=\bnorm{\sum_{n\geq 0} T^{*n}w_n}^2. 
 \end{align*}
 Thus $S_0$ is a well-defined  isometry. Moreover, for $w_n\in\mcl{W},n\geq 1$ we have 
 \begin{align*}
     S_0T^*(\sum_{n\geq 0} T^{\ast n}w_n)
                =\sum_{n\geq 0} (QT^*)^{n+2}w_n
                =QT^*S_0(\sum_{n\geq 0} T^{\ast n}w_n),
 \end{align*}
 hence $S_0T^*=QT^*S_0$. Take adjoint on both sides to get $TS=STQ$, where $S=S_0^*$. 
\end{proof}

\subsection{Lifting theorems}
 We recall some basic facts which we will be using frequently. Suppose $T\in\B{\mcl{H}}$ is a contraction with dilation $(V,\mcl{K})$ and let $Y\in\B{\mcl{K}}$ be an extension of $X\in\B{\mcl{H}}$. Then w.r.t to the decomposition $\mcl{K}=\mcl{H}\oplus\mcl{H}^\perp$ we have  $Y=\sMatrix{X&\ast\\ 0&\ast}$ and $V^n=\sMatrix{T^n & \ast\\ \ast &\ast}$ for all $n\geq 0$. Note that $V^nY^m=\sMatrix{T^nX^m&\ast\\ \ast&\ast}$, so that  $T^nX^m=P_{\mcl{H}}V^nY^m|_{\mcl{H}}$ for all $n,m\geq 0$. Similarly if $Y$ is any lifting of $X$, then  $X^nT^m=P_{\mcl{H}}Y^nV^m|_{\mcl{H}}$ for all $n,m\geq 0$.
 
 Now we prove an analogue of the ($q$-)commutant lifting theorem for $Q$-commuting operators. 
  
\begin{thm}[$Q$-commutant lifting]\label{thm-CLT}
 Let $T\in\B{\mcl{H}}$ be a contraction with isometric lifting $(V,\mcl{K})$, and let $X\in\B{\mcl{H}}$. Suppose $Q\in\B{\mcl{H}}$ and $\ol{Q}\in\B{\mcl{K}}$ are contractions.   
 \begin{enumerate}
   \item [(i)] 
          If $XT=QTX$, then there exists a lifting $Y\in\B{\mcl{K}}$ of $X$ such that $YV=\ol{Q}VY$.
   \item
          [(ii)] If $XT=TQX$, then there exists a lifting $Y\in\B{\mcl{K}}$ of $X$ such that $YV=V\ol{Q}Y$.
 \end{enumerate}    
 Further assume that $Q$ and $\ol{Q}$ are unitary. 
 \begin{enumerate}
  \item [(iii)]
         If $XT=TXQ$, then there exists a lifting $Y\in\B{\mcl{K}}$ of $X$ such that $YV=VY\ol{Q}$.  
 \end{enumerate}
 In all cases $T^nX^m=P_\mcl{H}V^nY^m|_\mcl{H}$ and $X^nT^m=P_\mcl{H}Y^nV^m|_\mcl{H}$ for all $n,m\geq 0$. Moreover,  $Y$ can be chosen such that $\norm{Y}=\norm{X}$. 
\end{thm}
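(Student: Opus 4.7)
My plan is to adapt Schaffer's explicit construction of the minimal isometric lifting, as indicated in the introduction. Without loss of generality, I would first reduce to the case where $(V,\mcl{K})$ is the minimal isometric lifting of $T$: any isometric lifting decomposes orthogonally as the minimal one on $\mcl{K}_{\min} := \cspan\{V^n\mcl{H} : n\geq 0\}$ plus a pure isometric summand on $\mcl{K}_{\min}^\perp \subseteq \mcl{H}^\perp$, and one may set $Y = 0$ on the latter (after adjusting $\ol{Q}$ to reduce $\mcl{K}_{\min}$). In the minimal case, realize $\mcl{K} = \mcl{H} \oplus \ell^2(\mcl{D}_T)$ with $V = \SMatrix{T & 0 \\ J & S}$, where $Jh = (D_Th, 0, 0, \ldots)$ and $S$ is the unilateral shift on $\ell^2(\mcl{D}_T)$. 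The hypothesis that $\mcl{H}$ reduces $\ol{Q}$ gives $\ol{Q} = Q \oplus Q'$ for some contraction $Q'\in\B{\ell^2(\mcl{D}_T)}$.

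For case (i), I would define $Y$ by the orbit formula $Y(V^n h) := (\ol{Q}V)^n(Xh)$ on the dense subspace $\lspan\{V^n h : h\in\mcl{H},\, n\geq 0\}$ of $\mcl{K}$. Tautologically, the intertwining $YV = \ol{Q}VY$ holds on this subspace, and then extends to $\mcl{K}$ by continuity. The lifting property $Y(\mcl{H}^\perp)\subseteq\mcl{H}^\perp$ follows by comparing $\mcl{H}$-components: one has $P_\mcl{H}V^n h = T^n h$ and $P_\mcl{H}(\ol{Q}V)^n(Xh) = (QT)^n Xh = XT^n h$, where the last equality is the iterated identity $XT^n = (QT)^n X$ obtained by repeatedly substituting $XT = QTX$. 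Well-definedness of $Y$ and the norm bound $\norm{Y}\leq\norm{X}$ reduce to a Gramian comparison between the inner products $\ip{V^{n_i}h_i, V^{n_j}h_j}$ and $\ip{(\ol{Q}V)^{n_i}(Xh_i), (\ol{Q}V)^{n_j}(Xh_j)}$, which is settled by a telescoping computation using the contractivity of $\ol{Q}$, the isometry $V^*V = I$, and the $Q$-commutation identity to cancel cross-terms. Case (ii) is entirely analogous with the modified orbit formula $Y(V^n h) := (V\ol{Q})^n(Xh)$, together with the iterated identity $XT^n = (TQ)^n X$ derived from $XT = TQX$.

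Case (iii), where $Q$ and $\ol{Q}$ are assumed unitary, is more delicate because the relation $XT = TXQ$ does not iterate cleanly when $Q$ and $T$ do not commute. Here the unitarity of $\ol{Q}$ is used to rewrite $YV = VY\ol{Q}$ as $VY = YV\ol{Q}^*$, and an analogous orbit-type construction is carried out on a suitable dense set generated by the $V$-action; the rearrangement of factors needed in the telescoping Gramian estimate is legitimized by the unitarity of $\ol{Q}$. The moment identities $T^n X^m = P_\mcl{H} V^n Y^m|_\mcl{H}$ and $X^n T^m = P_\mcl{H} Y^n V^m|_\mcl{H}$ then follow immediately from the block lower-triangular forms of $V$ and $Y$ as liftings, independently of the intertwining. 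The main obstacle throughout is the norm-preservation $\norm{Y} = \norm{X}$: while $\norm{Y}\geq\norm{X}$ is trivial from $Y$ being a lifting, the reverse inequality $\norm{Y}\leq\norm{X}$ requires the delicate Gramian telescoping described above, which generalizes the key estimate behind the $q$-commutant lifting theorem of \cite{DN18}.
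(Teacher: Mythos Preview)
Your orbit construction has a genuine gap: the formula $Y(V^n h):=(\ol{Q}V)^n(Xh)$ is \emph{not} well-defined in general, already in the classical case $Q=\ol{Q}=I$. Take $\mcl{H}=\mbb{C}^2$, $T=\sMatrix{0&1\\0&0}$, and $X=T$ (so $XT=TX=0$). In the Schaffer model one computes $Ve_2=e_1$ while $Ve_1\notin\mcl{H}$ and $Ve_1\neq 0$. Hence $e_1-Ve_2=0$ in $\mcl{K}$, but your formula assigns to it the value $Te_1-VTe_2=0-Ve_1\neq 0$. In other words, the dependence $Y(V^nh)=V^nXh$ already fails to be consistent; there is no ``telescoping Gramian'' that rescues this, because the inequality $\bnorm{\sum_i(\ol{Q}V)^{n_i}Xh_i}\le\norm{X}\,\bnorm{\sum_i V^{n_i}h_i}$ is essentially the full content of the commutant lifting theorem, not a routine computation. (In the $q$-commuting work \cite{DN18} the argument is likewise a reduction to the classical theorem, not a direct Gramian estimate.) Your reduction to the minimal lifting is also problematic as written: $\ol{Q}$ is part of the \emph{data}, not something you may ``adjust'', and there is no reason for the minimal subspace $\mcl{K}_{\min}$ to reduce the given $\ol{Q}$.

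The paper proceeds quite differently, by reducing each case to the \emph{classical} commutant lifting theorem via a $2\times 2$ trick. For (i), one observes that $\what{X}=\sMatrix{0&X\\0&0}$ commutes with $\what{T}=\sMatrix{QT&0\\0&T}$ on $\mcl{H}\oplus\mcl{H}$; the point is then to manufacture an isometric lifting of $QT$ that has $\ol{Q}V$ sitting in its $(1,1)$ corner. This is done by a Schaffer-type construction on $\mcl{K}\oplus\bigoplus_{1}^\infty\clran{DV}$ with $D=(I-\ol{Q}^*\ol{Q})^{1/2}$, so that the defect of $\ol{Q}V$ is absorbed into the extra summands. Pairing this with $V$ gives an isometric lifting $\what{V}$ of $\what{T}$; classical commutant lifting produces $\what{Y}$ with $\what{Y}\what{V}=\what{V}\what{Y}$ and $\snorm{\what{Y}}=\snorm{\what{X}}$, and the desired $Y$ is read off as the appropriate corner of $\what{Y}$. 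Case (ii) is the same with $V\ol{Q}$ in place of $\ol{Q}V$; case (iii) uses that $V\ol{Q}^*$ is \emph{already} an isometric lifting of $TQ^*$ when $\ol{Q}$ is unitary, so no extra Schaffer step is needed. The moment identities and norm equality then follow exactly as you indicated.
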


\begin{proof} 
 (i)  Set $\what T=\sMatrix{QT&0\\0&T}$ and $\what {X}=\sMatrix{0&X\\0&0}$  on $\mcl{H}\oplus\mcl{H}$. Let $D=(I-\ol{Q}^*\ol{Q})^{\frac{1}{2}}\in\B{\mcl{K}}$ and $\mcl{K}_0=\clran{DV}\subseteq\mcl{K}$. Let $\wtilde{\mcl{K}}=\mcl{K}\bigoplus(\bigoplus_1^\infty \mcl{K}_0)$. We consider $\mcl{H}\subseteq\mcl{K}\subseteq\wtilde{\mcl{K}}$ through the canonical identification. Now define $\wtilde{V}\in\B{\wtilde{\mcl{K}}}$ by    
 \begin{align*}  
         \wtilde{V}=\Matrix{
       			 \ol{Q}V  &  0                   &             0 & \dots   & \cdots\\
       			 DV  &  0                   &              0 & \dots  & \cdots \\
       		         0     & I_{\mcl{K}_0} &              0 & \dots  & \cdots \\
       		         0     &                  0  & I_{\mcl {K}_0} & \dots &  \cdots\\
       			 \vdots& \vdots        & \vdots       &  \ddots    & \vdots         
       		    }.
 \end{align*} 
 Note that $\wtilde{V}$ is an isometry. Also since $\ol{Q}^*|_{\mcl{H}}=Q^*$ and $V^*|_{\mcl{H}}=T^*$ we have 
 \begin{align*}
    \wtilde{V}^* h=(\ol{Q}V)^*h=V^*(\ol{Q}^*h)=T^*Q^*h=(QT)^*h
 \end{align*}
 for all  $h\in\mcl{H}$, i.e., $\wtilde{V}^*|_{\mcl{H}}=(QT)^*$. Thus $\wtilde{V}$ is an isometric lifting of $QT$. Set $\what V=\sMatrix{\wtilde{V}&0\\0&V}\in\B{\wtilde{\mcl{K}}\oplus\mcl{K}}$. Clearly $\what{V}$ is an isometric lifting of the contraction $\what{T}$. Since $\what{T}\what{X}=\what{X}\what{T}$, by commutant lifting theorem there exists $\what{Y}\in\B{\wtilde{\mcl{K}}\oplus\mcl{K}}$  such that $\what V\what Y=\what Y\what V$, $\what{Y}^*|_{\mcl{H}\oplus\mcl{H}}=\what{X}^*$  and $\snorm{\what Y}=\snorm{ \what X}$.  Let $\what{Y}=\sMatrix{\ast&B\\\ast&\ast}\in\B{\wtilde{\mcl{K}}\oplus\mcl{K}}$ where $B=\sMatrix{Y &Y_1&Y_2&Y_3\dots}^{tr}\in\B{\mcl{K},\wtilde{\mcl{K}}}$ with respect to the decomposition $\wtilde{\mcl{K}}=\mcl{K}\bigoplus(\bigoplus_1^\infty \mcl{K}_0)$. Then
 \begin{align*}
     \what V\what Y=\what Y\what V
     ~~\Longrightarrow~~
     \wtilde{V}B=BV
     ~~\Longrightarrow~~
     \ol{Q}VY=YV,
 \end{align*}
 where $Y\in\B{\mcl K}$. Also 
 \begin{align*}
     \what{Y}^*|_{\mcl{H}\oplus\mcl{H}}=\what{X}^*
     ~~\Longrightarrow~~
     B^*|_{\mcl{H}}=X^*
     ~~\Longrightarrow~~
     Y^*|_{\mcl{H}}=X^*,
 \end{align*}
 so that $Y$ is a lifting of $X$. Hence $\norm{X}\leq\norm{Y}\leq\norm{B}\leq\snorm{\what{Y}}=\snorm{\what{X}}=\norm{X}$. \\
 (ii)  Set  $\what T=\sMatrix{TQ&0\\0&T}$ on $\mcl{H}\oplus\mcl{H}$. Let $\what{X}, D, \wtilde{\mcl{K}}$ be as in case (i) with $\mcl{K}_0=\clran{VD}\subseteq\mcl{K}$. Define $\wtilde{V}\in\B{\wtilde{\mcl{K}}}$ by
 \begin{align*}  
         \wtilde{V}=\Matrix{
       			 V\ol{Q}  &  0                   &             0 & \dots   & \cdots\\
       			 VD  &  0                   &              0 & \dots  & \cdots \\
       		         0     & I_{\mcl{K}_0} &              0 & \dots  & \cdots \\
       		         0     &                  0  & I_{\mcl {K}_0} & \dots &  \cdots\\
       			 \vdots& \vdots        & \vdots       &  \ddots    & \vdots         
       		    }.
 \end{align*} 
 Note that $\what V=\sMatrix{\wtilde{V}&0\\0&V}\in\B{\wtilde{\mcl{K}}\oplus\mcl{K}}$ is an isometric lifting of the contraction $\what{T}$, and since $\what{T}\what{X}=\what{X}\what{T}$, by proceeding as in case (i) we can get  $Y\in\B{\mcl{K}}$ such that $V\ol{Q}Y=YV, Y^*|_{\mcl{H}}=X^*$ and $\norm{Y}=\norm{X}$. \\
 (iii) Suppose $Q$ is a unitary. Set $\what T=\sMatrix{TQ^*&0\\0&T}$ and $\what X=\sMatrix{0&0\\X&0}$ on $\mcl{H}\oplus\mcl{H}$, and $\what{V}=\sMatrix{V\ol{Q}^*&0\\0&V}$ on $\mcl{K}\oplus\mcl{K}$. Note that $(\what{V},\mcl{K}\oplus\mcl{K})$ is an isometric lifting of $\what{T}$. Since $\what{T}\what{X}=\what{X}\what{T}$, by commutant lifting theorem there exists a lifting  $\what{Y}=\sMatrix{\ast&\ast\\ Y&\ast}\in\B{\mcl{K}\oplus\mcl{K}}$ of $\what{X}$ such that $\what{Y}\what{V}=\what{V}\what{Y}$ and $\snorm{\what{Y}}=\snorm{\what{X}}$. Observe that $Y$ is the required lifting of $X$. This completes the proof. 
\end{proof}

\begin{rmk}
 In Theorem \ref{thm-CLT}(iii) suppose $Q$ is only a co-isometry, so that $TX=XTQ^*$. The above proof shows that, in such case  also we can get a lifting $Y$ of $X$ satisfying all properties except the equality $VY\ol{Q}=YV$, but  we get $VY=YV\ol{Q}^*$. 
\end{rmk}

\begin{thm}[Q-intertwining lifting]\label{thm-CIL}
 Let $T_i\in\B{\mcl{H}_i}$ be a contraction with isometric lifting $(V_i,\mcl{K}_i),$ $i=1,2$, and let $X\in\B{\mcl{H}_1,\mcl{H}_2}$. Suppose $Q\in\B{\mcl{H}_2}$ and $\ol{Q}\in\B{\mcl{K}_2}$ are contractions. 
 \begin{enumerate}
   \item [(i)]
         If  $XT_1=QT_2X$, then there exists a lifting $Y\in\B{\mcl{K}_1,\mcl{K}_2}$ of $X$ such that  $YV_1=\ol{Q}V_2Y$.
   \item [(ii)]
         If  $XT_1=T_2QX$, then there exists a lifting $Y\in\B{\mcl{K}_1,\mcl{K}_2}$ of $X$ such that $YV_1=V_2\ol{Q}Y$.
 \end{enumerate}
 Suppose $Q\in\B{\mcl{H}_1}$ and $\ol{Q}\in\B{\mcl{K}_1}$ are unitary. 
  \begin{enumerate}
 \item [(iii)]
        If $XT_1=T_2XQ$, then there exists a lifting $Y\in\B{\mcl{K}_1,\mcl{K}_2}$ of $X$ such that $YV_1=V_2Y\ol{Q}$.
 \end{enumerate}
  In all cases $T_2^nX=P_{\mcl{H}_2}V_2^nY|_{\mcl{H}_1}$ and $XT_1^n=P_{\mcl{H}_2}YV_1^n|_{\mcl{H}_1}$ for all $n\geq 0$. Moreover,  $Y$ can be chosen such that $\norm{Y}=\norm{X}$. 
 
\end{thm}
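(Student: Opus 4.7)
My plan is to deduce each part from the classical Sz.-Nagy--Foias intertwining lifting theorem. In each case the hypothesis exhibits $X$ as an honest intertwiner once one of the contractions is modified: $XT_1 = (QT_2)X$ in (i), $XT_1 = (T_2Q)X$ in (ii), or $X(T_1Q^*) = T_2X$ in (iii). The trick is to equip the modified contraction with an explicit isometric lifting, modelled on the Schaffer-type enlargement used in the proof of Theorem~\ref{thm-CLT}, then apply the classical theorem and read off $Y$ from the appropriate block.

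For part (i), I would set $D = (I-\ol{Q}^*\ol{Q})^{1/2} \in \B{\mcl{K}_2}$, $\mcl{K}_0 = \clran{DV_2}$, $\wtilde{\mcl{K}}_2 = \mcl{K}_2 \oplus \bigoplus_{1}^{\infty}\mcl{K}_0$, and define $\wtilde{V}_2 \in \B{\wtilde{\mcl{K}}_2}$ by exactly the same Schaffer-type matrix as in Theorem~\ref{thm-CLT}(i), so that $\wtilde{V}_2$ is an isometric lifting of $QT_2$ by the computation already done there. Applying the classical intertwining lifting theorem to the pair $(T_1, QT_2)$ with isometric liftings $(V_1, \mcl{K}_1)$ and $(\wtilde{V}_2, \wtilde{\mcl{K}}_2)$ yields a norm-preserving lifting $\wtilde{Y} : \mcl{K}_1 \to \wtilde{\mcl{K}}_2$ of $X$ satisfying $\wtilde{Y}V_1 = \wtilde{V}_2 \wtilde{Y}$. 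Writing $\wtilde{Y} = \Matrix{Y & Y_1 & Y_2 & \cdots}^{tr}$ in the block decomposition of $\wtilde{\mcl{K}}_2$ and reading the top row of the intertwining relation gives $YV_1 = \ol{Q}V_2 Y$; the lifting property for $\wtilde{Y}$ collapses to $Y^*|_{\mcl{H}_2} = X^*$ because $\mcl{H}_2$ is embedded in the first summand, and $\norm{X} \le \norm{Y} \le \snorm{\wtilde{Y}} = \norm{X}$ forces $\norm{Y} = \norm{X}$. Part (ii) is the mirror image, using $\mcl{K}_0 = \clran{V_2 D}$ and first column $V_2\ol{Q}, V_2 D$, producing $YV_1 = V_2\ol{Q}Y$.

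For part (iii) the modification is on the $\mcl{K}_1$ side and the construction is simpler. Since $\ol{Q}$ is unitary, $V_1\ol{Q}^*$ is already an isometry on $\mcl{K}_1$, and using $\ol{Q}|_{\mcl{H}_1} = Q$ together with $V_1^*|_{\mcl{H}_1} = T_1^*$ one computes $(V_1\ol{Q}^*)^*|_{\mcl{H}_1} = QT_1^* = (T_1Q^*)^*$, so $V_1\ol{Q}^*$ is an isometric lifting of the contraction $T_1Q^*$. Invoking the classical theorem for $(T_1Q^*, T_2)$ with liftings $(V_1\ol{Q}^*, \mcl{K}_1)$ and $(V_2, \mcl{K}_2)$ produces a norm-preserving lifting $Y: \mcl{K}_1 \to \mcl{K}_2$ of $X$ with $YV_1\ol{Q}^* = V_2Y$, and multiplying by $\ol{Q}$ on the right gives $YV_1 = V_2 Y \ol{Q}$. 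Finally, the two identities $T_2^n X = P_{\mcl{H}_2} V_2^n Y|_{\mcl{H}_1}$ and $XT_1^n = P_{\mcl{H}_2} YV_1^n|_{\mcl{H}_1}$ follow mechanically from $Y$ lifting $X$ and $V_i$ being an isometric lifting of $T_i$, by inspecting block forms as in the remarks preceding Theorem~\ref{thm-CLT}. I expect the only mildly delicate step is checking that $\wtilde{V}_2$ is indeed an isometric lifting of $QT_2$ in (i) (and its analogue in (ii)), but this computation is carried out verbatim in the proof of Theorem~\ref{thm-CLT} and can simply be reused.
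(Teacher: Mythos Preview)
Your proof is correct. It differs from the paper's \emph{main} proof, which instead reduces Theorem~\ref{thm-CIL} to Theorem~\ref{thm-CLT} via the standard direct-sum trick: one sets $\what{T}=\sMatrix{T_1&0\\0&T_2}$, $\what{X}=\sMatrix{0&0\\X&0}$, $\what{V}=\sMatrix{V_1&0\\0&V_2}$, and $\mcl{Q}=\sMatrix{I&0\\0&Q}$ (resp.\ $\sMatrix{Q&0\\0&I}$ for~(iii)), observes $\what{X}\what{T}=\mcl{Q}\what{T}\what{X}$, applies the $Q$-commutant lifting Theorem~\ref{thm-CLT}, and reads $Y$ off the lower-left block of $\what{Y}$. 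Your argument bypasses Theorem~\ref{thm-CLT} and invokes the classical intertwining lifting theorem directly after building an isometric lifting of $QT_2$ (resp.\ $T_2Q$, $T_1Q^*$). In fact the paper records precisely this alternative route in the Remark immediately following the proof, with the minor difference that it phrases the construction for (i) as taking an isometric lifting of the contraction $\ol{Q}V_2$ on $\mcl{K}_2$ (which is then automatically a lifting of $QT_2$, and has $\ol{Q}V_2$ as its upper-left corner), rather than writing down the explicit Schaffer matrix. Both packagings yield the same block structure $\wtilde{V}_2=\sMatrix{\ol{Q}V_2&0\\\ast&\ast}$ needed to extract $YV_1=\ol{Q}V_2Y$ from the top row. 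The paper's main proof has the virtue of making Theorem~\ref{thm-CIL} a formal corollary of Theorem~\ref{thm-CLT}; your route is more self-contained and makes transparent that the only ingredient beyond the classical theorem is the existence of an isometric lifting of $\ol{Q}V_2$ that restricts to $\ol{Q}V_2$ on $\mcl{K}_2$.
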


\begin{proof} 
 First assume that $XT_1=QT_2X$.  Set 
 \begin{align*}
   & \what{T}=\sMatrix{T_1&0\\0&T_2}, 
      ~\what{X}=\sMatrix{0&0\\ X&0},
      ~\mcl{Q}=\sMatrix{I_{\mcl{H}_1}&0\\0& Q}~~\in\B{\mcl{H}_1\oplus \mcl{H}_2},
       \quad\mbox{ and}\\
       &\ol{\mcl{Q}}=\sMatrix{I_{\mcl{K}_1}&0\\0&\ol{Q}},
       ~\what {V}=\sMatrix{V_1&0\\0&V_2}~~\in\B{\mcl{K}_1\oplus\mcl{K}_2}.
 \end{align*}
 Note that $\mcl{H}_1\oplus\mcl{H}_2$ is reducing for the contraction $\ol{\mcl{Q}}$, and $\ol{\mcl{Q}}|_{\mcl{H}_1\oplus\mcl{H}_2}=\mcl{Q}$.  Since $\what{X}\what{T}=\mcl{Q}\what{T}\what{X}$ and  $\what{V}$ is an isometric lifting of $\what{T}$, by Theorem \ref{thm-CLT} there exists a lifting $\what{Y}=\sMatrix{\ast& \ast\\ Y& \ast}\in\B{\mcl K_1\oplus\mcl K_2}$ of $\what{X}$ such that  $\what{Y}\what{V}=\ol{\mcl{Q}}\what{V}\what{Y}$ and $\snorm{\what{Y}}=\snorm{\what{X}}$.  As $\what{Y}\what{V}=\ol{\mcl{Q}}\what{V}\what{Y}$ we get $YV_1=\ol{Q}V_2Y$. Also since $\what{Y}^*|_{\mcl{H}_1\oplus\mcl{H}_2}=\what{X}^*$ we have $Y^*|_{\mcl{H}_2}=X^*$, i.e., $Y$ is a lifting of $X$. Hence  $\norm{X}\leq\norm{Y}\leq\snorm{\what{Y}}=\snorm{\what{X}}=\norm{X}$. The case when $T_2QX=XT_1$ can be proved similarly since $\what{T}\mcl{Q}\what{X}=\what{X}\what{T}$. For the case when $XT_1=T_2XQ$  repeat the above process by taking $\mcl{Q}=\sMatrix{Q&0\\0& I_{\mcl{H}_2}}\in\B{\mcl{H}_1\oplus\mcl{H}_2}$ and $\ol{\mcl{Q}}=\sMatrix{\ol{Q}&0\\0&I_{\mcl{K}_2}}\in\B{\mcl{K}_1\oplus\mcl{K}_2}$. 
\end{proof}

\begin{rmk}
  Theorem \ref{thm-CIL} can also be deduced from the classical intertwining theorem as follows: To prove $(i)$ of Theorem \ref{thm-CIL} let $(\wtilde{V},\wtilde{\mcl{K}})$ be an isometric lifting of the contraction $\ol{Q}V_2=\sMatrix{QT_2&0\\ \ast &\ast}\in\B{\mcl{K}_2}$. Then $(\wtilde{V},\wtilde{\mcl{K}})$ is also an isometric lifting of $QT_2$. Since $XT_1 = (QT_2)X$, the classical intertwining lifting theorem yields a lifting $\what{Y}\in\B{\mcl{K}_1,\wtilde{\mcl{K}}}$ of $X$ such that $\what{Y}V_1=\wtilde{V}\what{Y}$ and $\snorm{\what{Y}}=\norm{X}$. With respect to the decomposition $\wtilde{\mcl{K}}=\mcl{K}_2\oplus\mcl{K}_2^\perp$ let $\wtilde{V}=\sMatrix{\ol{Q}V_2&0\\\ast&\ast}$ and $\what{Y}=\sMatrix{Y\\ Y_0}$ with $Y\in\B{\mcl{K}_1,\mcl{K}_2}$.  Then $\what{Y}V_1=\wtilde{V}\what{Y}$  implies that  $YV_1=\ol{Q}V_2Y$. Also, since $\what{Y}$ is a lifting of $X$ we have $Y$ is a lifting of $X$, so that $\norm{X}\leq\norm{Y}\leq\snorm{\what{Y}}=\norm{X}$. Part  $(ii)$ is proved similarly by replacing $\ol{Q}V_2$ with $V_2\ol{Q}$. For $(iii)$, rewrite the assumption as $X(T_1Q^*)=T_2X$, and note that $V_1\ol{Q}^*$ is already an isometric lifting of $T_1Q^*$, so the result follows even more directly from classical intertwining lifting theorem. Specializing Theorem \ref{thm-CIL} to $T_1=T_2$ and $V_1=V_2$, we obtain Theorem \ref{thm-CLT}.  
\end{rmk}

 Suppose $T\in\B{\mcl{H}}$. Recall that $(V,\mcl{K})$ is an isometric lifting of a $T$ if and only if $(V^*,\mcl{K})$ is an co-isometric extension of $T^*$. So we can restate the Theorems \ref{thm-CLT}, \ref{thm-CIL} as follows. We will be using this versions later. 

\begin{thm}[$Q$-commutant extension]\label{thm-CET}
 Let $T\in\B{\mcl{H}}$ be a contraction with co-isometric extension $(W,\mcl{K})$, and let $X\in\B{\mcl{H}}$. Suppose $Q\in\B{\mcl{H}}$ and $\ol{Q}\in\B{\mcl{K}}$ are contractions.   
  \begin{enumerate}
   \item [(i)]
         If $XTQ=TX$, then there exists an extension $Y\in\B{\mcl{K}}$ of $X$ such that $YW\ol{Q}=WY$.
   \item [(ii)]
         If $XQT=TX$, then there exists an extension $Y\in\B{\mcl{K}}$ of $X$ such that $Y\ol{Q}W=WY$. 
 \end{enumerate}    
 Further assume that $Q$ and $\ol{Q}$ are unitary. 
 \begin{enumerate}
  \item [(iii)]
         If $QXT=TX$, then there exists an extension $Y\in\B{\mcl{K}}$ of $X$ such that $\ol{Q}YW=WY$.    
 \end{enumerate}
 In all cases $T^nX^m=P_\mcl{H}W^nY^m|_\mcl{H}$ and $X^nT^m=P_\mcl{H}Y^nW^m|_\mcl{H}$ for all $n,m\geq 0$. Moreover,  $Y$ can be chosen such that $\norm{Y}=\norm{X}$.  
\end{thm}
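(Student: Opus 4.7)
The plan is to deduce Theorem \ref{thm-CET} from Theorem \ref{thm-CLT} by passing to adjoints throughout, exactly as the sentence preceding the statement suggests (``we can restate the Theorems \ref{thm-CLT}, \ref{thm-CIL} as follows''). The starting observation is that $(W,\mcl{K})$ is a co-isometric extension of $T$ if and only if $(W^*,\mcl{K})$ is an isometric lifting of $T^*$, and likewise $Y\in\B{\mcl{K}}$ is an extension of $X$ precisely when $Y^*$ is a lifting of $X^*$. Since $\mcl{H}$ is reducing for $\ol{Q}$ with $\ol{Q}|_{\mcl{H}}=Q$, the same holds for $\ol{Q}^*$ with $\ol{Q}^*|_{\mcl{H}}=Q^*$, and $\ol{Q}^*$ remains a contraction (respectively unitary) whenever $\ol{Q}$ is. So the hypotheses of Theorem \ref{thm-CLT} transfer cleanly after conjugation.

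For part (i), I would take adjoints of $XTQ=TX$ to obtain $X^*T^*=Q^*T^*X^*$. Writing $\wtilde{T}=T^*$, $\wtilde{X}=X^*$, $\wtilde{Q}=Q^*$, $\wtilde{\ol{Q}}=\ol{Q}^*$, and $\wtilde{V}=W^*$, this reads $\wtilde{X}\wtilde{T}=\wtilde{Q}\wtilde{T}\wtilde{X}$ with $(\wtilde{V},\mcl{K})$ an isometric lifting of the contraction $\wtilde{T}$. Theorem \ref{thm-CLT}(i) then produces a lifting $\wtilde{Y}\in\B{\mcl{K}}$ of $\wtilde{X}$ with $\wtilde{Y}\wtilde{V}=\wtilde{\ol{Q}}\wtilde{V}\wtilde{Y}$ and $\norm{\wtilde{Y}}=\norm{\wtilde{X}}$. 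Setting $Y:=\wtilde{Y}^*$ makes $Y$ an extension of $X$, and taking adjoints of $\wtilde{Y}W^*=\ol{Q}^*W^*\wtilde{Y}$ yields $WY=YW\ol{Q}$, the desired identity. The norm equality $\norm{Y}=\norm{\wtilde{Y}}=\norm{X}$ is automatic.

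Cases (ii) and (iii) follow the same template. In (ii), the adjoint of $XQT=TX$ is $\wtilde{X}\wtilde{T}=\wtilde{T}\wtilde{Q}\wtilde{X}$, matching Theorem \ref{thm-CLT}(ii); adjoining the resulting intertwining relation $\wtilde{Y}W^*=W^*\ol{Q}^*\wtilde{Y}$ gives $Y\ol{Q}W=WY$. In (iii), the adjoint of $QXT=TX$ is $\wtilde{X}\wtilde{T}=\wtilde{T}\wtilde{X}\wtilde{Q}$, matching Theorem \ref{thm-CLT}(iii); the unitarity hypothesis transfers since $Q^*$ is unitary iff $Q$ is. Adjoining $\wtilde{Y}W^*=W^*\wtilde{Y}\ol{Q}^*$ yields $\ol{Q}YW=WY$.

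Finally, the matrix identities $T^nX^m=P_{\mcl{H}}W^nY^m|_{\mcl{H}}$ and $X^nT^m=P_{\mcl{H}}Y^nW^m|_{\mcl{H}}$ come for free, since $Y$ extends $X$ and $W$ extends $T$, so both leave $\mcl{H}$ invariant with restrictions $X$ and $T$ respectively, and compositions restrict accordingly on $\mcl{H}$. I do not anticipate any substantive obstacle: the whole argument is a duality bookkeeping exercise, and the only point requiring care is checking the three adjoint rearrangements land in the correct cases of Theorem \ref{thm-CLT}, which they do on the nose.
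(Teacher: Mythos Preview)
Your proposal is correct and is precisely the approach the paper takes: the paper does not give a separate proof of Theorem~\ref{thm-CET} at all, but simply declares it a restatement of Theorem~\ref{thm-CLT} via the duality between isometric liftings and co-isometric extensions, which is exactly the adjoint bookkeeping you carry out. Your case-by-case matching of the adjointed relations to parts (i)--(iii) of Theorem~\ref{thm-CLT} is accurate, and the compression identities indeed follow immediately from both $W$ and $Y$ being extensions.
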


\begin{thm}[Q-intertwining extension]
 Let $T_i\in\B{\mcl{H}_i}$ be a contraction with co-isometric extension $(W_i,\mcl{K}_i),~i=1,2$ and let $X\in\B{\mcl{H}_1,\mcl{H}_2}$. Suppose $Q\in\B{\mcl{H}_1}$ and $\ol{Q}\in\B{\mcl{K}_1}$ are contractions.  
\begin{enumerate}
   \item [(i)]
         If $XT_1Q=T_2X$, then there exists an extension  $Y\in\B{\mcl{K}_1,\mcl{K}_2}$ of $X$ such that $YW_1\ol{Q}=W_2Y$.
   \item [(ii)]
	  If  $XQT_1=T_2X$, then there exists an extension  $Y\in\B{\mcl{K}_1,\mcl{K}_2}$ of $X$ such that $Y\ol{Q}W_1=W_2Y$.
 \end{enumerate}
 Suppose $Q\in\B{\mcl{H}_2}$ and $\ol{Q}\in\B{\mcl{K}_2}$ are unitary.
  \begin{enumerate}
   \item [(iii)]
         If $QXT_1=T_2X$, then there exists an extension  $Y\in\B{\mcl{K}_1,\mcl{K}_2}$ of $X$ such that $\ol{Q}YW_1=W_2Y$.
 \end{enumerate}
 In all cases $T_2^nX=P_{\mcl{H}_2}V_2^nY|_{\mcl{H}_1}$ and $XT_1^n=P_{\mcl{H}_2}YV_1^n|_{\mcl{H}_1}$ for all $n\geq 0$. Moreover,  $Y$ can be chosen such that $\norm{Y}=\norm{X}$. 
\end{thm}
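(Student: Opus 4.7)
My plan is to deduce this theorem directly from Theorem \ref{thm-CIL} ($Q$-intertwining lifting) by a routine duality argument, using three standard observations: (a) $(W_i,\mcl{K}_i)$ is a co-isometric extension of $T_i$ if and only if $(W_i^*,\mcl{K}_i)$ is an isometric lifting of $T_i^*$; (b) $Y\in\B{\mcl{K}_1,\mcl{K}_2}$ is a norm-preserving extension of $X$ if and only if $Y^*$ is a norm-preserving lifting of $X^*$; and (c) intertwining relations dualize in the obvious way, e.g.\ $YW_1\ol{Q}=W_2Y$ is equivalent to $Y^*W_2^*=\ol{Q}^*W_1^*Y^*$.

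For part (i), I would take adjoints of $XT_1Q=T_2X$ to get $Q^*T_1^*X^*=X^*T_2^*$. Relabeling $X':=X^*\in\B{\mcl{H}_2,\mcl{H}_1}$, $T_1':=T_2^*$, $T_2':=T_1^*$, $V_1':=W_2^*$, $V_2':=W_1^*$, $Q':=Q^*$, $\ol{Q}':=\ol{Q}^*$, the hypothesis becomes $X'T_1'=Q'T_2'X'$, which is exactly the hypothesis of Theorem \ref{thm-CIL}(i) for the contractions $T_i'$ with isometric liftings $(V_i',\mcl{K}_{3-i})$ and contractions $Q'\in\B{\mcl{H}_1}$, $\ol{Q}'\in\B{\mcl{K}_1}$. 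That theorem yields a lifting $Y'$ of $X'$ with $Y'V_1'=\ol{Q}'V_2'Y'$ and $\norm{Y'}=\norm{X'}$; setting $Y:=Y'^{\,*}$ and dualizing gives $YW_1\ol{Q}=W_2Y$, together with $Y|_{\mcl{H}_1}=X$ and $\norm{Y}=\norm{X}$. Parts (ii) and (iii) are handled the same way: adjointing $XQT_1=T_2X$ gives $T_1^*Q^*X^*=X^*T_2^*$, which matches Theorem \ref{thm-CIL}(ii); adjointing $QXT_1=T_2X$ gives $T_1^*X^*Q^*=X^*T_2^*$, matching Theorem \ref{thm-CIL}(iii), and in the latter case unitarity of $Q'=Q^*\in\B{\mcl{H}_2}$ is inherited from unitarity of $Q$, as required.

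The terminal identities $T_2^nX=P_{\mcl{H}_2}W_2^nY|_{\mcl{H}_1}$ and $XT_1^n=P_{\mcl{H}_2}YW_1^n|_{\mcl{H}_1}$ are automatic from the preliminary remarks opening Section~2 (the ones preceding Theorem \ref{thm-CLT}), which apply to any extension $Y$ of $X$ together with the extensions $W_i$ of $T_i$; invariance of $\mcl{H}_i$ under $W_i$ with $W_i|_{\mcl{H}_i}=T_i$ in fact makes the projection unnecessary. There is no substantive obstacle here: the only thing requiring care is the bookkeeping of which Hilbert space each $Q$ and $\ol{Q}$ sits on after the relabeling, and checking that each of the three adjointed hypotheses lines up with the correct case of Theorem \ref{thm-CIL}—which, after tracking which factor of the intertwining $Q$ sits next to, it does.
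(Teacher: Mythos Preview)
Your proposal is correct and is exactly the approach the paper takes: the paper does not give a separate proof of this theorem at all, but simply remarks (in the sentence preceding Theorem~\ref{thm-CET}) that since $(V,\mcl{K})$ is an isometric lifting of $T$ if and only if $(V^*,\mcl{K})$ is a co-isometric extension of $T^*$, Theorems~\ref{thm-CLT} and~\ref{thm-CIL} can be ``restated'' in extension form. Your explicit bookkeeping of the relabeling and the case-matching with Theorem~\ref{thm-CIL}(i)--(iii) is precisely the content of that restatement, and your replacement of the paper's apparent typo $V_i$ by $W_i$ in the terminal identities is the intended reading.
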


\begin{rmk}
 Note that case $(i)$ is a stronger version of \cite[Theorem 3]{Seb94b}. In \cite{Seb94b} Sebestyen considered $\ol{Q}\in\B{\mcl{K}_1}$ with the additional assumption that $\cspan\{W_1^{*k}h: h\in\mcl{H}_1, 0\leq k\leq n\}$ reduces $\ol{Q}$ for every $n\geq 0$.  
\end{rmk} 
 
 Recall that the minimal isometric dilation is an isometric lifting.  Thus, Theorem \ref{thm-CLT} characterizes the operators $X$ which are  $Q$-commutant to $T$ in terms of the operators $Y$ which are $\ol{Q}$-commutant to the minimal isometric dilation $V$ of $T$.  Next we characterizes the operators $X$ which are  $Q$-commutant to $T$ in terms of the operators $Y$ which are $\ol{Q}$-commutant to the minimal unitary dilation of $T$, provided  $Q$ is a unitary. To prove our result we use the following lemma.

\begin{lem}[\cite{DMP68}]\label{lem-UD-1}
 Suppose $T\in\B{\mcl{H}}$ is a contraction with the unique minimal co-isometric extension $(W,\mcl{K}_0)$. Let $(U^*,\mcl{K})$ be the  unique minimal co-isometric extension of $W^*$. Then $U^*$ is a unitary, and $(U,\mcl{K})$ is the unique minimal unitary dilation of $T$. 
\end{lem}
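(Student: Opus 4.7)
The plan is to establish the unitarity of $U$ first, and then verify the dilation and minimality properties for $(U,\mcl{K})$.

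For the unitarity, first note that the hypothesis that $U^*$ is a co-isometry gives $U^*(U^*)^* = U^*U = I$, so $U$ is already an isometry; it remains to prove $U^*$ is also isometric. Two facts drive this: since $W$ is a co-isometry on $\mcl{K}_0$, the operator $W^*$ is an isometry there, so $U^*|_{\mcl{K}_0}=W^*$ is isometric; and by minimality of the co-isometric extension, $\mcl{K} = \cspan\{U^n(\mcl{K}_0) : n \geq 0\}$. On a vector $x = \sum_{i=0}^n U^i h_i$ with $h_i \in \mcl{K}_0$, I would expand both $\|x\|^2$ and $\|U^*x\|^2 = \bnorm{W^* h_0 + \sum_{i=1}^n U^{i-1} h_i}^2$ as sums of pairwise inner products. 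The diagonal terms match by the isometry of $U$ on the higher powers together with $\|W^* h_0\| = \|h_0\|$; the pure off-diagonal pieces $\ip{U^i h_i, U^j h_j}$ and $\ip{U^{i-1} h_i, U^{j-1} h_j}$ (both $i, j \geq 1$) coincide after applying $U^*U = I$; and the cross terms reduce via $\ip{W^* h_0, U^{j-1} h_j} = \ip{U^* h_0, U^{j-1} h_j} = \ip{h_0, U^j h_j}$. Thus $\|U^*x\|=\|x\|$ on a dense subset of $\mcl{K}$, extending by continuity, so $U^*$ is isometric and hence $U$ is unitary.

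For the dilation property, observe that because $U^*$ extends $W^*$, the subspace $\mcl{K}_0^\perp$ is $U$-invariant, so $U$ has the block form $\sMatrix{W & 0 \\ * & *}$ with respect to $\mcl{K} = \mcl{K}_0 \oplus \mcl{K}_0^\perp$. Iterating gives $P_{\mcl{K}_0}U^n|_{\mcl{K}_0}=W^n$ for every $n \geq 0$. Since $W$ extends $T$, the subspace $\mcl{H}$ is $W$-invariant and $W^n|_\mcl{H} = T^n$; projecting once more onto $\mcl{H}\subseteq \mcl{K}_0$ yields $P_\mcl{H}U^n|_\mcl{H}=T^n$.

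For minimality, the two minimal extension hypotheses give $\mcl{K} = \cspan\{U^n\mcl{K}_0 : n \geq 0\}$ and $\mcl{K}_0 = \cspan\{W^{*m}\mcl{H} : m \geq 0\}$. Since $U$ is unitary with $U^*|_{\mcl{K}_0}=W^*$ and $\mcl{K}_0$ is invariant under $U^*$, a short induction gives $U^{-m}h = W^{*m}h$ for every $h \in \mcl{H}$ and $m \geq 0$. Substituting yields $\mcl{K} = \cspan\{U^{n-m}\mcl{H} : n, m \geq 0\} = \cspan\{U^k\mcl{H} : k \in \mbb{Z}\}$, which is the minimality of the unitary dilation. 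Uniqueness of $(U, \mcl{K})$ then follows from the standard uniqueness of minimal unitary dilations. The main obstacle is the first step: producing the isometry of $U^*$ from the weaker hypothesis that $U^*$ merely co-isometrically extends the isometry $W^*$, which requires the careful bookkeeping in the density argument above; the remaining steps are bookkeeping with matrix forms and the two minimality conditions.
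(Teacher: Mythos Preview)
The paper does not supply a proof of this lemma; it is stated with a citation to \cite{DMP68} and used as a black box in the proof of Theorem~\ref{thm-unitary-CDT}. Your proposal therefore cannot be compared against a proof in the paper, but it is a correct and self-contained verification of the quoted result.

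All three steps are sound. In the unitarity step, your bookkeeping is right: since $U^*U=I$, the inner products $\ip{U^{i-1}h_i,U^{j-1}h_j}$ and $\ip{U^{i}h_i,U^{j}h_j}$ agree, and the cross terms reduce via the adjoint identity $\ip{U^*h_0,U^{j-1}h_j}=\ip{h_0,U^{j}h_j}$ exactly as you say; density then gives $UU^*=I$. The dilation step is a routine use of the lower-triangular block form of a lifting, and the minimality step correctly combines $U^{*m}|_{\mcl{K}_0}=W^{*m}$ with $U^{-m}=U^{*m}$ to collapse the double span to $\cspan\{U^k\mcl{H}:k\in\mbb{Z}\}$. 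One could shorten the first step slightly by observing that on the span of $\{U^n\mcl{K}_0:n\ge 0\}$ the map $U$ is isometric with range containing the same span (because $U^*|_{\mcl{K}_0}$ is isometric forces $\mcl{K}_0\subseteq U\mcl{K}_0$), hence $U$ is surjective on $\mcl{K}$; but your explicit inner-product computation is equally valid and perhaps clearer.
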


\begin{thm}\label{thm-unitary-CDT}
 Let $T\in \B{\mcl H}$ be a contraction with the  minimal unitary dilation $(U,\mcl{K})$. Suppose $Q\in\B{\mcl{H}}$ is a unitary and let $X\in \B{\mcl{H}}$. 
 \begin{enumerate}[label=(\roman*)]
   \item 
          If $XT=QTX$, then there exist $\ol{Q}\in\B{\mcl{K}}$ unitary and  a dilation $Y\in\B{\mcl{K}}$ of $X$ such that $YU=\ol{Q}UY$.
   \item 
         If $XT=TXQ$, then there exist $\ol{Q}\in\B{\mcl{K}}$ unitary and a dilation $Y\in\B{\mcl{K}}$ of $X$ such that $YU=UY\ol{Q}$.         
 \end{enumerate}
 In fact, given $q\in\mbb{T}$ we can choose $\ol{Q}=Q\oplus qI_{\mcl{H}^\perp}$. In all cases $X^nT^m=P_\mcl{H}Y^nU^m|_\mcl{H}$ and $T^nX^m=P_\mcl{H}U^nY^m|_\mcl{H}$ for all $n,m\geq 0$. Moreover,  $Y$ can be chosen such that $\norm{Y}=\norm{X}$. 
\end{thm}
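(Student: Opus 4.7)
The plan is to build $Y$ in two stages mirroring the two-step construction of the minimal unitary dilation afforded by Lemma \ref{lem-UD-1}. Let $(W,\mcl{K}_0)$ be the minimal co-isometric extension of $T$ and $(U^*,\mcl{K})$ the minimal co-isometric extension of $W^*$; by Lemma \ref{lem-UD-1}, $U$ is unitary and $(U,\mcl{K})$ is the minimal unitary dilation of $T$. A crucial by-product is that $U$ is automatically an isometric lifting of the co-isometry $W$, since $U^*|_{\mcl{K}_0}=W^*$.

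For (i), I would first rewrite $XT=QTX$ as $Q^*XT=TX$, which fits the hypothesis of Theorem \ref{thm-CET}(iii) with the unitary $Q^*$ in place of $Q$. Feeding in the unitary extension $\ol{Q^*}_{\mcl{K}_0}:=Q^*\oplus\bar q I$, that theorem produces an extension $Y_1\in\B{\mcl{K}_0}$ of $X$ with $\ol{Q^*}_{\mcl{K}_0}Y_1W=WY_1$, equivalently $Y_1W=\ol{Q}_{\mcl{K}_0}WY_1$ where $\ol{Q}_{\mcl{K}_0}:=Q\oplus qI$, and with $\norm{Y_1}=\norm{X}$. This is precisely the hypothesis of Theorem \ref{thm-CLT}(i) applied to the contraction $W$ with its isometric lifting $(U,\mcl{K})$ and operator $Y_1$; feeding in the unitary $\ol{Q}:=Q\oplus qI_{\mcl{H}^\perp}\in\B{\mcl{K}}$ (which restricts to $\ol{Q}_{\mcl{K}_0}$ on $\mcl{K}_0$), Theorem \ref{thm-CLT}(i) yields a lifting $Y\in\B{\mcl{K}}$ of $Y_1$ satisfying $YU=\ol{Q}UY$ and $\norm{Y}=\norm{Y_1}$. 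Case (ii) runs symmetrically: rewrite $XT=TXQ$ as $XTQ^*=TX$; apply Theorem \ref{thm-CET}(i) (with unitary inputs) to obtain $Y_1W\ol{Q^*}_{\mcl{K}_0}=WY_1$, i.e., $Y_1W=WY_1\ol{Q}_{\mcl{K}_0}$; then invoke Theorem \ref{thm-CLT}(iii) with the same unitary $\ol{Q}$ to produce $Y\in\B{\mcl{K}}$ with $YU=UY\ol{Q}$.

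Finally I would verify the dilation properties. Since $Y$ is a lifting of $Y_1$ and $U$ is a lifting of $W$, with respect to $\mcl{K}_0\oplus\mcl{K}_0^\perp$ we have $Y=\sMatrix{Y_1&0\\\ast&\ast}$ and $U=\sMatrix{W&0\\\ast&\ast}$; since $Y_1$ extends $X$ and $W$ extends $T$, with respect to $\mcl{H}\oplus(\mcl{K}_0\ominus\mcl{H})$ we have $Y_1=\sMatrix{X&\ast\\0&\ast}$ and $W=\sMatrix{T&\ast\\0&\ast}$. Two routine $(1,1)$-block multiplications then give $P_{\mcl{H}}Y^nU^m|_{\mcl{H}}=X^nT^m$ and $P_{\mcl{H}}U^nY^m|_{\mcl{H}}=T^nX^m$ for all $n,m\geq 0$; setting $m=0$ in the first identity shows $Y$ is a dilation of $X$, and chaining the two norm-preserving steps gives $\norm{Y}=\norm{X}$.

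The argument is essentially routine once the two-stage decomposition of $U$ is in hand; no serious obstacle arises. The only delicate point is the bookkeeping needed to translate each hypothesis on $(X,T)$ into the correctly oriented intertwining-type hypothesis at each of the two stages, and the observation that using the same scalar $q\in\mbb{T}$ at both stages produces exactly the clean form $\ol{Q}=Q\oplus qI_{\mcl{H}^\perp}$ asserted in the statement.
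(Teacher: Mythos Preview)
Your proof is correct and follows essentially the same two-stage strategy as the paper: first apply Theorem \ref{thm-CET} at the level of the minimal co-isometric extension $(W,\mcl{K}_0)$, then pass to $(U,\mcl{K})$ via Lemma \ref{lem-UD-1}. The only cosmetic difference is that at the second stage you invoke Theorem \ref{thm-CLT} directly (viewing $U$ as an isometric lifting of $W$), whereas the paper takes adjoints and applies Theorem \ref{thm-CET} again (viewing $U^*$ as a co-isometric extension of $W^*$); since these theorems are adjoint restatements of one another, the arguments are the same.
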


\begin{proof}
 We prove only the case $(i)$. Case $(ii)$ can be proved similarly. Suppose $q\in\mbb{T}$ and $(W,\mcl{K}_0)$ is the minimal co-isometric extension  of $T$. From Lemma \ref{lem-UD-1} and the uniqueness of minimal unitary dilation we can assume that $(U^*,\mcl{K})$ is the minimal co-isometric extension of $W^*$. Note that $\mcl{H}\subseteq\mcl{K}_0\subseteq\mcl{K}$.  Let $Q_0=Q\oplus qI_{\mcl{K}_0\ominus\mcl{H}}\in\B{\mcl{K}_0}$. Since $Q^*XT=TX$, by Theorem \ref{thm-CET} (iii) there exists an extension $Y_0\in\B{\mcl{K}_0}$ of $X$ with $\norm{Y_0}=\norm{X}$ such that $Q_0^*Y_0W=WY_0$, $T^nX^m=P_\mcl{H}W^nY_0^m|_\mcl{H}$ and $X^nT^m=P_\mcl{H}Y_0^nW^m|_\mcl{H}$ for all $n,m\geq 0$. Again since $Y_0^*W^*Q_0^*=W^*Y_0^*$, by Theorem \ref{thm-CET} (i) there exists an extension $Y^*\in\B{\mcl{K}}$ of $Y_0^*$ with $\norm{Y^*}=\norm{Y_0^*}$ such that $Y^*U^*(Q\oplus qI_{\mcl{K}\ominus\mcl{H}})^*=U^*Y^*$, $W^{\ast n}Y_0^{\ast m}=P_{\mcl{K}_0}U^{\ast n}Y^{\ast m}|_{\mcl{K}_0}$ and $Y_0^{\ast n}W^{\ast m}=P_{\mcl{K}_0}Y^{\ast n}U^{\ast m}|_{\mcl{K}_0}$ for all $n,m\geq 0$.  Note that $Y$ has the required properties.  
\end{proof}

\begin{thm}\label{thm-unitary-CIT}
 Let $T_i\in \B{\mcl H_i}$ be a contraction with the  minimal unitary dilation $(U_i,\mcl{K}_i),~i=1,2$ and $X\in \B{\mcl{H}_1,\mcl{H}_2}$. 
 \begin{enumerate}[label=(\roman*)]
   \item
        Suppose $Q\in\B{\mcl{H}_2}$ is a unitary such that  $XT_1=QT_2X$. Then there exist $\ol{Q}\in\B{\mcl{K}_2}$ unitary and $Y\in\B{\mcl{K}_1,\mcl{K}_2}$ such that $YU_1=\ol{Q}U_2Y$.
   \item
        Suppose $Q\in\B{\mcl{H}_1}$ is a unitary such that  $XT_1=T_2XQ$. Then there exist $\ol{Q}\in\B{\mcl{K}}$ unitary and $Y\in\B{\mcl{K}_1,\mcl{K}_2}$ such that $YU_1=U_2Y\ol{Q}$.     
 \end{enumerate}
 In fact, given $q\in\mbb{T}$ we can choose $\ol{Q}=Q\oplus qI$.  In all cases $XT_1^n=P_{\mcl{H}_2}YU_1^n|_{\mcl{H}_1}$ and $T_2^nX=P_{\mcl{H}_2}U_2^nY|_{\mcl{H}_1}$ for all $n\geq 0$. Moreover,  $Y$ can be chosen such that $\norm{Y}=\norm{X}$. 
\end{thm}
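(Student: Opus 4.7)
\noindent\emph{Proof plan.} The argument is a direct transcription of the proof of Theorem~\ref{thm-unitary-CDT}, with Theorem~\ref{thm-CET} replaced by its intertwining analogue (the preceding $Q$-intertwining extension theorem). By Lemma~\ref{lem-UD-1}, I may realise the minimal unitary dilation through a tower $\mcl{H}_i\subseteq\mcl{K}_{0,i}\subseteq\mcl{K}_i$, where $(W_i,\mcl{K}_{0,i})$ is the minimal co-isometric extension of $T_i$ and $(U_i^*,\mcl{K}_i)$ is the minimal co-isometric extension of $W_i^*$. I will spell out case~(i); case~(ii) is entirely symmetric, obtained by applying parts (i) and (iii) of the intertwining extension theorem in the reverse order.

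For case~(i), fix $q\in\mbb{T}$ and rewrite the hypothesis as $Q^*XT_1=T_2X$. Since $Q^*\in\B{\mcl{H}_2}$ is unitary, case~(iii) of the $Q$-intertwining extension theorem, applied with the unitary extension $(\ol{Q}_0)^*:=Q^*\oplus\ol{q}\, I_{\mcl{K}_{0,2}\ominus\mcl{H}_2}\in\B{\mcl{K}_{0,2}}$, produces a norm-preserving extension $Y_0\in\B{\mcl{K}_{0,1},\mcl{K}_{0,2}}$ of $X$ with $(\ol{Q}_0)^*Y_0W_1=W_2Y_0$. Taking adjoints, this becomes $Y_0^*W_2^*(\ol{Q}_0)^*=W_1^*Y_0^*$, which is the hypothesis of case~(i) of the intertwining extension theorem for the operator $Y_0^*$ intertwining the contractions $W_2^*$ and $W_1^*$, with $(\ol{Q}_0)^*\in\B{\mcl{K}_{0,2}}$ playing the role of the (unitary) $Q$. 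Since $(U_i^*,\mcl{K}_i)$ is the minimal co-isometric extension of $W_i^*$ by Lemma~\ref{lem-UD-1}, this delivers, for the unitary extension $\ol{Q}^*:=Q^*\oplus\ol{q}\, I_{\mcl{K}_2\ominus\mcl{H}_2}\in\B{\mcl{K}_2}$ of $(\ol{Q}_0)^*$, a norm-preserving extension $Y^*\in\B{\mcl{K}_2,\mcl{K}_1}$ of $Y_0^*$ with $Y^*U_2^*\ol{Q}^*=U_1^*Y^*$. Adjoining once more yields $YU_1=\ol{Q}U_2Y$ with $\ol{Q}=Q\oplus qI_{\mcl{K}_2\ominus\mcl{H}_2}$ unitary and $\norm{Y}=\norm{X}$.

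The dilation identities $XT_1^n=P_{\mcl{H}_2}YU_1^n|_{\mcl{H}_1}$ and $T_2^nX=P_{\mcl{H}_2}U_2^nY|_{\mcl{H}_1}$ are then routine: in the three-term decomposition $\mcl{K}_i=\mcl{H}_i\oplus(\mcl{K}_{0,i}\ominus\mcl{H}_i)\oplus\mcl{K}_{0,i}^\perp$, the operator $Y$ (a lifting of the extension $Y_0$ of $X$) and $U_i^n$ (a lifting of $W_i^n$, which in turn extends $T_i^n$) have compatible block-triangular forms, and the asserted identities read off the $(1,1)$-blocks of $YU_1^n$ and $U_2^nY$ respectively. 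The main bookkeeping obstacle is selecting the correct part of the intertwining extension theorem at each stage and verifying that the intermediate $\ol{Q}_0=Q\oplus qI$ on $\mcl{K}_{0,i}$ and the final $\ol{Q}=Q\oplus qI$ on $\mcl{K}_i$ are compatible, which is immediate from the definitions; no new analytic input is required beyond what already appears in the proof of Theorem~\ref{thm-unitary-CDT}.
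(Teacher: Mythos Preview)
Your argument is correct. You reproduce the two-step tower from the proof of Theorem~\ref{thm-unitary-CDT} verbatim, replacing each application of Theorem~\ref{thm-CET} by the corresponding case of the $Q$-intertwining extension theorem; the bookkeeping with $\ol{Q}_0$ versus $\ol{Q}$ and the block-triangular verification of the compression identities are all in order.

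The paper takes a different route. Rather than rerun the tower argument in the two-space setting, it reduces Theorem~\ref{thm-unitary-CIT} to the already-proved single-space case (Theorem~\ref{thm-unitary-CDT}) by the same $2\times 2$ block trick used to deduce Theorem~\ref{thm-CIL} from Theorem~\ref{thm-CLT}: set $\what{T}=T_1\oplus T_2$ on $\mcl{H}_1\oplus\mcl{H}_2$, $\what{X}=\sMatrix{0&0\\X&0}$, $\mcl{Q}=I_{\mcl{H}_1}\oplus Q$, observe $\what{X}\what{T}=\mcl{Q}\what{T}\what{X}$, and note that the minimal unitary dilation of $\what{T}$ is $U_1\oplus U_2$ on $\mcl{K}_1\oplus\mcl{K}_2$; then Theorem~\ref{thm-unitary-CDT} furnishes $\what{Y}$ and one reads off $Y$ as the $(2,1)$ block. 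The paper's approach is shorter (a one-line reduction once Theorem~\ref{thm-unitary-CDT} is in hand), while yours is more self-contained and makes the two-stage extension/lifting structure of $Y$ transparent, which in turn makes the final compression identities immediate. Both buy the same conclusion with the same norm control and the same freedom in choosing $\ol{Q}=Q\oplus qI$.
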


 Proof is similar to that of Theorem \ref{thm-CIL}. 

\subsection{Dilation theorems}

 In this section we prove an analogue  of Ando's dilation theorem for $Q$-commuting contractions. 
    
\begin{thm}[$Q$-commuting isometric dilation]\label{thm-Q-CID}
 Let $T_1,T_2\in\B{\mcl{H}}$ be contractions and $Q\in\B{\mcl{H}}$ be a unitary such that  $T_2T_1=QT_1T_2$. Then there exists a Hilbert space $\mcl{K}\supseteq\mcl{H}$, isometries $V_1,V_2\in\B{\mcl{K}}$ and $\ol{Q}\in\B{\mcl{K}}$ unitary  such that 
     \begin{enumerate}[label=(\roman*)]
     \item $V_2V_1=\ol{Q}V_1V_2$; and 
     \item $V_i$ is a lifting (and hence a dilation) of $T_i$ so that $T_1^nT_2^m=P_\mcl{H}V_1^nV_2^m|_{\mcl{H}}$ and $T_2^nT_1^m=P_\mcl{H}V_2^nV_1^m|_{\mcl{H}}$ for all $n,m\geq 0$. 
     \end{enumerate}
  In fact, given $q\in\mbb{T}$ we can choose $\ol{Q}=Q\oplus qI_{\mcl{K}\ominus\mcl{H}}$.    
\end{thm}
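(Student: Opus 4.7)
The plan is to construct the joint isometric lifting in two stages: first apply the $Q$-commutant lifting theorem to isometrize $T_1$ while keeping $T_2$ merely as a contractive lifting, and then use an explicit Schaffer construction to isometrize the resulting operator while extending the first isometry in a way compatible with $\ol Q$. Precisely, Theorem~\ref{thm-CLT}(i), applied with $X=T_2$, $T=T_1$, and the unitary $\ol Q_1=Q\oplus qI_{\mcl{K}_1\ominus\mcl{H}}\in\B{\mcl{K}_1}$, yields an isometric lifting $(V_1,\mcl{K}_1)$ of $T_1$ together with a contractive lifting $Y\in\B{\mcl{K}_1}$ of $T_2$ satisfying $YV_1=\ol Q_1V_1Y$.

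The key observation is that, since $\ol Q_1$ is unitary and $V_1$ is an isometry,
$$V_1^*D_Y^2V_1=V_1^*V_1-(YV_1)^*(YV_1)=I-Y^*V_1^*\ol Q_1^*\ol Q_1V_1Y=I-Y^*Y=D_Y^2,$$
where $D_Y=(I-Y^*Y)^{1/2}$. Hence the prescription $W_0(D_Yx):=D_YV_1x$ extends uniquely to an isometry $W_0$ on $\mcl D_Y:=\clran(D_Y)$. Set $\mcl K=\mcl K_1\oplus\bigoplus_{n\ge 1}\mcl D_Y$, let $V_2\in\B{\mcl K}$ be the Schaffer isometric lifting of $Y$ (with $Y$ in slot $(0,0)$, $D_Y$ in slot $(1,0)$, and identity operators on the subdiagonal below), and define the block-diagonal operators
$$\tilde V_1=V_1\oplus\bar qW_0\oplus\bar q^2W_0\oplus\cdots,\qquad \ol Q=Q\oplus qI_{\mcl K\ominus\mcl H}.$$
Each summand of $\tilde V_1$ is an isometry (since $|q|=1$), so $\tilde V_1$ is an isometry; $\ol Q$ is plainly unitary; and because $\tilde V_1^*|_{\mcl H}=V_1^*|_{\mcl H}=T_1^*$ and $V_2^*|_{\mcl H}=Y^*|_{\mcl H}=T_2^*$, both $\tilde V_1$ and $V_2$ are isometric liftings of $T_1$ and $T_2$ respectively.

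The commutation $V_2\tilde V_1=\ol Q\tilde V_1V_2$ is then checked block by block on $\mcl K_1\oplus\mcl D_Y\oplus\mcl D_Y\oplus\cdots$: the $(0,0)$ entry reproduces the hypothesis $YV_1=\ol Q_1V_1Y$; the $(1,0)$ entry becomes $D_YV_1=q(\bar qW_0)D_Y=W_0D_Y$, which is the defining property of $W_0$; and each $(i,i-1)$ entry for $i\ge 2$ becomes $\bar q^{i-1}W_0=q\,\bar q^iW_0$, which holds because $|q|=1$. All remaining blocks vanish identically on both sides. I expect the main subtlety to be identifying the correct scalar twists $\bar q^i$ on the successive $\mcl D_Y$-copies of $\tilde V_1$: these are forced by the requirement that $\ol Q$ have the prescribed form $Q\oplus qI$, and it is precisely the unitarity of $Q$ (used crucially in the identity $V_1^*D_Y^2V_1=D_Y^2$) that allows $D_Y$ to intertwine $V_1$ with the Schaffer shift of $V_2$ and makes such a matching possible.
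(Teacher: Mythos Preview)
Your proof is correct, but it takes a genuinely different route from the paper in the second stage.

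Both approaches begin identically: apply Theorem~\ref{thm-CLT}(i) to the minimal isometric dilation $(\what V_1,\what{\mcl K})$ of $T_1$ to obtain a contractive lifting $\what V_2$ (your $Y$) of $T_2$ satisfying $\what V_2\what V_1=(Q\oplus qI)\what V_1\what V_2$. The difference lies in how the second operator is made isometric. The paper takes $(V_2,\mcl K)$ to be the \emph{minimal} isometric dilation of $\what V_2$, then applies Theorem~\ref{thm-CLT}(i) a second time (now with $X=\what V_1$, $T=\what V_2$, and $Q$ replaced by $Q^*$) to get a lifting $V_1$ of $\what V_1$ with $V_1V_2=(Q\oplus qI)^*V_2V_1$. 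At this point $V_1$ is only known to be a contraction; the paper then argues, using the minimality of $\mcl K$ over $\what{\mcl K}$ and the relation $V_2V_1=\ol QV_1V_2$, that in fact $\norm{V_1V_2^nk}=\norm{V_2^nk}$ for all $k\in\what{\mcl K}$, whence $V_1$ is an isometry.

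Your construction instead builds the second dilation explicitly via Schaffer and writes down $\tilde V_1$ by hand. The crucial identity $V_1^*D_Y^2V_1=D_Y^2$ (which, as you correctly note, uses the unitarity of $\ol Q_1$) lets you define the intertwining isometry $W_0$ on $\mcl D_Y$, and the scalar twists $\bar q^i$ on successive copies of $\mcl D_Y$ are exactly what is needed to match the $qI$ action of $\ol Q$ on $\mcl K\ominus\mcl K_1$. This makes $\tilde V_1$ an isometry \emph{by construction}, so you avoid the minimality/norm argument entirely. Your approach is more explicit and self-contained (it does not invoke the $Q$-commutant lifting theorem a second time), while the paper's approach is more uniform in that it reuses the same abstract lifting result twice and relies on minimality to upgrade the final contraction to an isometry.
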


\begin{proof} 
 Fix $q\in\mbb{T}$. Let $(\what{V}_1,\what{\mcl{K}})$ be the minimal isometric dilation of $T_1$. Since $T_2T_1=QT_1T_2$, by Theorem \ref{thm-CLT}(i) there exists $\what{V}_2\in\B{\what{\mcl{K}}}$ such that 
 \begin{align*}
     \what{V}_2\what{V}_1=(Q\oplus qI_{\what{\mcl{K}}\ominus\mcl{H}})\what{V}_1\what{V}_2,
     \quad\what{V}_2^*|_{\mcl{H}}=T_2^*
     \quad\mbox{and }
     \snorm{\what{V}_2}=\norm{T_2}\leq 1.
 \end{align*}
 Suppose $(V_2,\mcl{K})$ is the minimal isometric dilation of $\what{V}_2$. Note that $\mcl{H}\subseteq\what{\mcl{K}}\subseteq\mcl{K}$. Since $\what{V}_1\what{V}_2=(Q\oplus qI_{\what{\mcl{K}}\ominus\mcl{H}})^*\what{V}_2\what{V}_1$, from Theorem \ref{thm-CLT}(i) we get $V_1\in\B{\mcl{K}}$ such that 
 \begin{align*}
    V_1V_2=(Q\oplus qI_{\mcl{K}\ominus\mcl{H}})^*V_2V_1,
    \quad V_1^*|_{\what{{\mcl{K}}}}=\what{V}_1^*
    \quad\mbox{and } \norm{V_1}=\snorm{\what{V}_1}\leq 1.
 \end{align*}
 Let $V_1=\sMatrix{\what{V}_1&0\\ A& B}$ w.r.t the decomposition $\mcl{K}=\what{\mcl{K}}\oplus\what{\mcl{K}}^\perp$. Since
 \begin{align*}
   0\leq \what{V}_1^*\what{V}_1+A^*A\leq\snorm{\what{V}_1^*\what{V}_1+A^*A}I\leq\norm{V_1^*V_1}I\leq I 
 \end{align*}
 and $\what{V}_1$ is an isometry we have $A=0$, so that  $V_1|_{\what{\mcl{K}}}=\what{V}_1$. Since $Q$ and $V_2$ are isometries we have  
 \begin{align*}
   \norm{V_1V_2^nk} = \norm{(Q\oplus qI_{\mcl{H}^\perp})V_1V_2^nk}
                     = \norm{V_2V_1V_2^{n-1}k}
                     = \norm{V_1V_2^{n-1}k}\quad\forall~n\geq 1. 
  \end{align*}                   
 Repeating the above step recursively we get 
 \begin{align*}  
     \norm{V_1V_2^nk}
                     = \norm{V_1V_2k}
                     = \norm{(Q\oplus qI_{\mcl{H}^\perp})V_1V_2k}
                     = \norm{V_2V_1k}=\norm{V_1k}=\snorm{\what{V}_1k}=\norm{k} 
                    = \norm{V_2^nk}
 \end{align*}
 for every $k\in\what{\mcl{K}}$ and $n\geq 1$. Clearly the above equality holds for $n=0$. Hence 
 \begin{align*}
   \snorm{(I-V_1^*V_1)^{\frac{1}{2}}V_2^nk}^2
          =\ip{V_2^nk,V_2^nk}-\ip{V_2^nk,V_1^*V_1V_2^nk}
          =\norm{V_2^nk}^2-\norm{V_1V_2^nk}^2
          =0
 \end{align*}
 for all $k\in\what{\mcl{K}}$ and $n\geq 0$. Since $\mcl{K}=\cspan\{V_2^nk: k\in\what{\mcl{K}}, n\geq 0\}$ from above equation we get $(I-V_1^*V_1)=0$, i.e., $V_1$ is an isometry. Moreover, since minimal isometric dilations are liftings we have $V_i^*|_{\mcl{H}}=(V_i^*|_{\what{\mcl{K}}})|_{\mcl{H}}=\what{V}_i^*|_{\mcl{H}}=T_i^*$ for $i=1,2$. 
 Thus $V_1, V_2$ are isometric lifting of $T_1, T_2$ respectively, so that $(ii)$ follows. This completes the proof.  
\end{proof}

\begin{cor}[$Q$-commuting co-isometric dilation]\label{cor-QCD}
 Let $T_1,T_2\in\B{\mcl{H}}$ be contractions and $Q\in\B{\mcl{H}}$ be a unitary such that $T_2T_1=T_1T_2Q$. Then there exists a Hilbert space $\mcl{K}\supseteq\mcl{H}$, co-isometries $W_1,W_2\in\B{\mcl{K}}$ and $\ol{Q}\in\B{\mcl{K}}$ unitary such that 
      \begin{enumerate}[label=(\roman*)]
           \item $W_2W_1=W_1W_2\ol{Q}$; and
          \item $W_i$ is an extension (and hence a dilation) of $T_i$ so that $T_1^nT_2^m=P_\mcl{H}W_1^nW_2^m|_{\mcl{H}}$ and $T_2^nT_1^m=P_\mcl{H}W_2^nW_1^m|_{\mcl{H}}$ for all $n,m\geq 0$. 
     \end{enumerate}  
      In fact, given $q\in\mbb{T}$ we can choose $\ol{Q}=Q\oplus qI_{\mcl{K}\ominus\mcl{H}}$.
\end{cor}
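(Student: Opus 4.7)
The plan is to derive this corollary directly from Theorem \ref{thm-Q-CID} by a routine adjoint argument. Starting from $T_2 T_1 = T_1 T_2 Q$, taking adjoints yields $T_1^* T_2^* = Q^* T_2^* T_1^*$. Setting $\wtilde{T}_1 := T_2^*$ and $\wtilde{T}_2 := T_1^*$, this rewrites as $\wtilde{T}_2 \wtilde{T}_1 = Q^* \wtilde{T}_1 \wtilde{T}_2$, which is precisely the hypothesis of Theorem \ref{thm-Q-CID} applied to the contractions $\wtilde{T}_1, \wtilde{T}_2$ and the unitary $Q^*$.

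Next, given the prescribed $q \in \mbb{T}$, I would apply Theorem \ref{thm-Q-CID} to $\wtilde{T}_1, \wtilde{T}_2$ with the free parameter chosen as $\bar{q}$. This produces a Hilbert space $\mcl{K} \supseteq \mcl{H}$ and isometries $V_1, V_2 \in \B{\mcl{K}}$, each $V_i$ being an isometric lifting of $\wtilde{T}_i$, satisfying
\[
  V_2 V_1 = (Q^* \oplus \bar{q}\, I_{\mcl{K}\ominus\mcl{H}})\, V_1 V_2.
\]
Set $W_1 := V_2^*$, $W_2 := V_1^*$, and $\ol{Q} := Q \oplus q\, I_{\mcl{K}\ominus\mcl{H}}$. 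Then $W_1, W_2$ are co-isometries and $\ol{Q}$ is unitary with $\ol{Q}|_{\mcl{H}} = Q$.

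Since $V_i$ lifts $\wtilde{T}_i$, the adjoint $V_i^*$ extends $\wtilde{T}_i^*$; hence $W_1 = V_2^*$ extends $T_1$ and $W_2 = V_1^*$ extends $T_2$. Taking the adjoint of the displayed relation above and observing that $(Q^* \oplus \bar{q}\, I)^* = \ol{Q}$ gives $V_1^* V_2^* = V_2^* V_1^* \ol{Q}$, i.e.\ $W_2 W_1 = W_1 W_2 \ol{Q}$, which is (i). Part (ii) then follows from the standard fact, recalled at the start of Section 2, that co-isometric extensions are automatically dilations and that the product formulas $T_1^n T_2^m = P_{\mcl{H}} W_1^n W_2^m|_{\mcl{H}}$ (and its counterpart) are immediate from the upper-triangular matrix form of $W_i$ with respect to $\mcl{K} = \mcl{H} \oplus \mcl{H}^\perp$.

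I do not anticipate any real obstacle: the entire proof is bookkeeping around the adjoint operation. The only point requiring care is to match the free unitary parameters on both sides, which forces the specific choice $\bar{q}$ when invoking Theorem \ref{thm-Q-CID} so that the adjoint comes out in the prescribed form $\ol{Q} = Q \oplus q\, I_{\mcl{K}\ominus\mcl{H}}$.
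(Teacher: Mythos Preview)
Your proposal is correct and follows essentially the same route as the paper's proof: take adjoints to convert $T_2T_1=T_1T_2Q$ into $T_1^*T_2^*=Q^*T_2^*T_1^*$, apply Theorem \ref{thm-Q-CID} to the pair $(T_2^*,T_1^*)$ with unitary $Q^*$ and parameter $\bar q$, then undo the adjoint. The paper's argument is identical in substance, only more compressed.
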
 

\begin{proof}  
 Fix $q\in\mbb{T}$.  Since $T_1^*T_2^*=Q^*T_2^*T_1^*$, by Theorem \ref{thm-Q-CID}  there exists Hilbert space $\mcl{K}\supseteq\mcl{H}$ and an isometric lifting  $W_i^*\in\B{\mcl{K}}$ of $T_i^*$ such that $W_1^*W_2^*=(Q\oplus qI_{\mcl{H}^\perp})^*W_2^*W_1^*$. Note that $W_i\in\B{\mcl{K}},~i=1,2$ are the required co-isometric extensions. 
\end{proof}

\begin{thm}\label{thm-Q-iso-uni} 
 Let $V_1,V_2\in\B{\mcl{H}}$ be isometries and $Q\in\B{\mcl{H}}$ be a unitary such that  $V_2V_1=QV_1V_2$. Then there exists a Hilbert space $\mcl{K}\supseteq\mcl{H}$ and unitaries $\ol{Q},U_1,U_2\in\B{\mcl{K}}$ such that 
  \begin{enumerate}[label=(\roman*)]
              \item $U_2U_1=\ol{Q}U_1U_2$; and
              \item $U_i$ is an extension (and hence a dilation) of $V_i$ so that $V_1^nV_2^m=P_\mcl{H}U_1^nU_2^m|_{\mcl{H}}$ and $V_2^nV_1^m=P_\mcl{H}U_2^nU_1^m|_{\mcl{H}}$ for all $n,m\geq 0$. 
  \end{enumerate}
  In fact, given $q\in\mbb{T}$ we can choose $\ol{Q}=Q\oplus qI_{\mcl{K}\ominus\mcl{H}}$.
\end{thm}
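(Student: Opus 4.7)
My plan is to prove the theorem in two stages: first produce $\ol{Q}$-commuting \emph{isometric} extensions of $V_1,V_2$ by applying Corollary \ref{cor-QCD} to their adjoints, and then promote those isometries to unitaries via an Ito-style extension argument.

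For the first stage, taking adjoints in $V_2V_1=QV_1V_2$ and using unitarity of $Q$ yields $V_2^*V_1^*=V_1^*V_2^*Q$, so the contractions $V_1^*,V_2^*$ satisfy the hypothesis $T_2T_1=T_1T_2Q$ of Corollary \ref{cor-QCD}. That corollary produces co-isometries $W_1,W_2$ on some $\mcl{K}_0\supseteq\mcl{H}$ extending $V_i^*$, with $W_2W_1=W_1W_2\ol{Q}_0$ and $\ol{Q}_0=Q\oplus qI_{\mcl{K}_0\ominus\mcl{H}}$ unitary. Because $V_i^*$ is already a co-isometry, writing $W_i=\sMatrix{V_i^* & A_i \\ 0 & B_i}$ with respect to $\mcl{K}_0=\mcl{H}\oplus\mcl{H}^\perp$ and enforcing $W_iW_i^*=I$ together with $V_iV_i^*=I$ forces $A_i=0$. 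Thus $\mcl{H}$ reduces $W_i$, the block $B_i$ is a co-isometry on $\mcl{H}^\perp$, and taking adjoints gives $V_i^{(1)}:=W_i^*=V_i\oplus B_i^*$, an \emph{isometric} extension of $V_i$ to $\mcl{K}_0$, with $V_2^{(1)}V_1^{(1)}=\ol{Q}_0 V_1^{(1)}V_2^{(1)}$.

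For the second stage I would adapt the classical Ito argument for simultaneous unitary extension of commuting isometries to the twisted setting. Take the minimal unitary extension $(U_1,\mcl{K}_1)$ of the isometry $V_1^{(1)}$ (which exists since $V_1^{(1)}$ is isometric and coincides with its minimal unitary dilation). Using the $\ol{Q}_0$-commutation, extend $V_2^{(1)}$ to an isometry $\wtilde{V}_2$ on $\mcl{K}_1$ by prescribing it on the shift-tails of $U_1$ through the twisted rule forced by $V_2^{(1)}V_1^{(1)}=\ol{Q}_0V_1^{(1)}V_2^{(1)}$, so that $\wtilde{V}_2U_1=(\ol{Q}_0\oplus qI)U_1\wtilde{V}_2$. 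Then take the minimal unitary extension $(U_2,\mcl{K})$ of $\wtilde{V}_2$ and extend $U_1$ to a unitary on $\mcl{K}$ by an analogous Wold-type construction relative to $U_2$, twisted by $\ol{Q}=Q\oplus qI_{\mcl{K}\ominus\mcl{H}}$, so that $U_2U_1=\ol{Q}U_1U_2$ holds on all of $\mcl{K}$. The dilation identities $V_1^nV_2^m=P_\mcl{H}U_1^nU_2^m|_\mcl{H}$ and its symmetric counterpart are immediate once each $U_i$ extends $V_i$, since then $\mcl{H}$ is $U_i$-invariant and $U_i|_\mcl{H}=V_i$.

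The main obstacle is the very last step of stage two: extending the already-unitary $U_1$ on $\mcl{K}_1$ to a unitary on the larger space $\mcl{K}$ so that the twisted commutation with the newly constructed $U_2$ persists. This is the $\ol{Q}$-twisted analogue of the delicate step in Ito's classical proof, and it requires careful bookkeeping of how $\ol{Q}$ acts on each wandering subspace of $U_2$ relative to $\wtilde{V}_2$, and of how its powers interleave with the iterated shift action of $U_2$ and the ambient unitary structure already fixed on $\mcl{K}_1$.
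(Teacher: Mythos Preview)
Your Stage~1 is entirely redundant. You start with isometries $V_1,V_2$ satisfying $V_2V_1=QV_1V_2$, invoke Corollary~\ref{cor-QCD}, and after correctly observing that co-isometric extensions of co-isometries split as block diagonals, you end up with\ldots\ isometries $V_1^{(1)},V_2^{(1)}$ satisfying $V_2^{(1)}V_1^{(1)}=\ol{Q}_0V_1^{(1)}V_2^{(1)}$ on a possibly larger space. That is exactly the hypothesis you began with; no progress has been made. You should delete Stage~1 and feed the original $(V_1,V_2,Q,\mcl{H})$ directly into Stage~2.

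Your Stage~2 is the paper's proof, with the roles of the two indices swapped. The paper first takes the minimal unitary extension $(\what{V}_2,\what{\mcl{H}})$ of $V_2$, then defines an isometric extension $\what{V}_1$ of $V_1$ on $\what{\mcl{H}}$ by the explicit twisted rule $\what{V}_1(\what{V}_2^{\,n}h):=(\what{Q}^*\what{V}_2)^nV_1h$; next it takes the minimal unitary extension $(U_1,\mcl{K})$ of $\what{V}_1$ and defines $U_2$ on $\mcl{K}$ by $U_2(U_1^n\what{h}):=(\ol{Q}U_1)^n\what{V}_2\what{h}$. What you call the ``main obstacle'' is exactly the content of the paper's argument: rather than extending a unitary once more, one shows that the last isometry constructed ($U_2$ in the paper, the extension of $U_1$ in your ordering) is already surjective. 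The paper does this by an induction on the finite truncations $\mcl{K}_n=\lspan\{U_1^{*j}\what{h}:0\le j\le n\}$, using that $\what{V}_2$ is already unitary on $\what{\mcl{H}}$ and that each $\mcl{K}_n$ reduces $\ol{Q}$, to show $U_2(\mcl{K}_n)=\mcl{K}_n$ for all $n$. Your outline is correct in spirit; what is missing is precisely this surjectivity induction, and your phrasing (``extend $U_1$ to a unitary on $\mcl{K}$'') obscures that the real task is proving an already-defined isometry is onto, not performing yet another extension.
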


\begin{proof} 
 Fix $q\in\mbb{T}$. Suppose $(\what{V}_2,\what{\mcl{H}})$ is the minimal unitary dilation of $V_2$.  Then  $\what{\mcl{H}}=\cspan\{\what{V}_2^n\mcl{H}: n\in\mbb{Z}\}$ and $\what{V}_2$ is an extension of $V_2$. Define $\what{V}_1:\what{\mcl{H}}\to\what{\mcl{H}}$ by 
 \begin{align*}
     \what{V}_1(\what{V}_2^nh)=\big(\what{Q}^*\what{V}_2\big)^nV_1h\qquad\forall~h\in\mcl{H}, n\in\mbb{Z},
 \end{align*}
 where $\what{Q}=(Q\oplus qI_{\what{\mcl{H}}\ominus\mcl{H}})\in\B{\what{\mcl{H}}}$. Then for all $h,h'\in\mcl{H}$ and $n\geq m$,
 \begin{align*}
   \bip{\what{V}_1(\what{V}_2^nh),\what{V}_1(\what{V}_2^mh')}
           &=\bip{\big(\what{Q}^*\what{V}_2\big)^{n-m}V_1h,V_1h'} \\
           &=\bip{\big(\what{Q}^*\what{V}_2\big)^{n-m-1}\what{Q}^*\what{V}_2V_1h,V_1h'}\\
           &=\bip{\big(\what{Q}^*\what{V}_2\big)^{n-m-1}Q^*V_2V_1h,V_1h'} \\
           &=\bip{\big(\what{Q}^*\what{V}_2\big)^{n-m-1}V_1V_2h,V_1h'}\\
           &=\bip{V_1V_2^{n-m}h,V_1h'}\quad\quad\qquad(\mbox{ by repeating above steps})\\
           &=\bip{\what{V}_2^{n-m}h,h'}\qquad\qquad\qquad(\because \what{V}_2|_\mcl{H}=V_2 \mbox{ and }n-m\geq 0) \\
           &=\ip{\what{V}_2^nh,\what{V}_2^mh'}.
 \end{align*}
 Thus $\what{V}_1$ is a well defined isometry. Clearly $\what{V}_1$ is an extension of $V_1$. Moreover, $\what{Q}\what{V}_1\what{V}_2=\what{V}_2\what{V}_1$ on $\what{\mcl{H}}$. Suppose $(U_1, \mcl{K})$ is the minimal unitary dilation (and hence an extension) of $\what{V}_1$, so that $\mcl{K}=\cspan\{U_1^n(\what{\mcl{H}}): n\in\mbb{Z}\}=\cspan\{U_1^n(\what{\mcl{H}}): n\leq 0\}$ as $U_1$ leaves $\what{\mcl{H}}$ invariant. Define $U_2:\mcl{K}\to\mcl{K}$ by 
  \begin{align*}
     U_2(U_1^n\what{h})=(\ol{Q}U_1)^n\what{V}_2\what{h}\qquad\forall~\what{h}\in\what{\mcl{H}},n\in\mbb{Z},
 \end{align*}
 where $\ol{Q}=(Q\oplus qI_{\mcl{K}\ominus\mcl{H}})\in\B{\mcl{K}}$. As in the case of $\what{V}_1$, it can be verified that $U_2$ is also a well defined isometric extension of $\what{V}_2$. Clearly $U_2U_1=\ol{Q}U_1U_2$.  Now we shall prove that $U_2$ is onto, so that it is a  unitary. For, if $n> 0$ let  
 \begin{align*}
   \mcl{K}_n=\cspan\{U_1^{*j}\what{h}:0\leq j\leq n, \what{h}\in\what{\mcl{H}}\}
            =\lspan\{U_1^{*j}\what{h}:0\leq j\leq n, \what{h}\in\what{\mcl{H}}\}.
 \end{align*}
 We prove by induction that $U_2$ maps $\mcl{K}_n$ onto $\mcl{K}_n$ for every $n> 0$. Suppose $n=1$. Then for all $0\leq j\leq 1$ and $\what{h}\in\what{\mcl{H}}$ we have $U_1^{*j}\what{V}_2^\ast \what{Q}^j\what{h}\in\mcl{K}_1$ and    
 \begin{align*}
    U_2(U_1^{*j}\what{V}_2^\ast \what{Q}^j\what{h})
       =(\ol{Q}U_1)^{*j}\what{V}_2\what{V}_2^*\what{Q}^j\what{h}
       =U_1^{*j}\ol{Q}^{*j}\what{Q}^j\what{h}
       =U_1^{*j}\what{h}.
 \end{align*}
 Thus $U_2(\mcl{K}_1)=\mcl{K}_1$. Now assume that $U_2$ maps $\mcl{K}_n$ onto $\mcl{K}_n$. To prove that $U_2$ maps $\mcl{K}_{n+1}$ onto $\mcl{K}_{n+1}$ it is enough to prove that $U_1^{\ast (n+1)}\what{h}$ has a pre-image for every $\what{h}\in\what{\mcl{H}}$. Since $U_1^{*n}\what{h}\in\mcl{K}_n$ there exists $x\in\mcl{K}_n$ such that $U_2(x)=U_1^{\ast n}\what{h}$. Note that $\mcl{H}\subseteq\what{\mcl{H}}\subseteq\mcl{K}_n\subseteq\mcl{K}$, hence $\mcl{K}_n$ is reducing for  $\ol{Q}$. Therefore there exists  $z\in\mcl{K}_n$ such that $U_2(z)=\ol{Q}U_2x$. Clearly $U_1^*(z)\in\mcl{K}_{n+1}$, and 
 \begin{align*}
    U_2U_1^*z = U_1^*\ol{Q}^*U_2(z)
                       = U_1^*U_2x
                       = U_1^{\ast n+1}\what{h}.
 \end{align*}
 Thus $U_2$ maps $\mcl{K}_{n+1}$ onto $\mcl{K}_{n+1}$. By induction we conclude that $U_1^{*n}\what{h}$ has a pre-image under $U_2$ for all $n> 0,\what{h}\in\mcl{\what{H}}$. Since $\mcl{K}=\cspan\{U_1^n(\what{\mcl{H}}): n\leq 0\}$ we conclude that $U_2$ is onto. Note that $(\ol{Q},U_1,U_2,\mcl{K})$ is the required quadruple. 
\end{proof}

\begin{cor}\label{cor-Q-coiso-uni} 
 Let $W_1,W_2\in\B{\mcl{H}}$ be co-isometries and $Q\in\B{\mcl{H}}$ be a unitary such that  $W_2W_1=W_1W_2Q$. Then there exist a Hilbert space $\mcl{K}\supseteq\mcl{H}$ and unitaries $\ol{Q},U_1,U_2\in\B{\mcl{K}}$ such that 
  \begin{enumerate}[label=(\roman*)]
              \item $U_2U_1=U_1U_2\ol{Q}$; and
              \item $U_i$ is a lifting (and hence a dilation) of $W_i$ so that $W_1^nW_2^m=P_\mcl{H}U_1^nU_2^m|_{\mcl{H}}$ and $W_2^nW_1^m=P_\mcl{H}U_2^nU_1^m|_{\mcl{H}}$ for all $n,m\geq 0$. 
  \end{enumerate}
  In fact, given $q\in\mbb{T}$ we can choose $\ol{Q}=Q\oplus qI_{\mcl{K}\ominus\mcl{H}}$.
\end{cor}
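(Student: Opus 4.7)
The plan is to derive Corollary \ref{cor-Q-coiso-uni} from Theorem \ref{thm-Q-iso-uni} by a straightforward adjoint argument, parallel to how Corollary \ref{cor-QCD} is deduced from Theorem \ref{thm-Q-CID}. Fix $q\in\mbb{T}$. Taking adjoints in the hypothesis $W_2W_1=W_1W_2Q$ yields $W_1^*W_2^*=Q^*W_2^*W_1^*$, which (multiplying on the left by the unitary $Q$) rearranges to
\[
   W_2^*W_1^*=Q\,W_1^*W_2^*.
\]
Thus $V_i:=W_i^*\in\B{\mcl{H}}$ are isometries satisfying the hypothesis of Theorem \ref{thm-Q-iso-uni} with the same unitary $Q$.

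Next, I would apply Theorem \ref{thm-Q-iso-uni} to the pair $(V_1,V_2)$ with the chosen $q\in\mbb{T}$. This produces a Hilbert space $\mcl{K}\supseteq\mcl{H}$, a unitary $\ol{Q}=Q\oplus qI_{\mcl{K}\ominus\mcl{H}}\in\B{\mcl{K}}$, and unitaries $\wtilde{U}_1,\wtilde{U}_2\in\B{\mcl{K}}$ such that $\wtilde{U}_2\wtilde{U}_1=\ol{Q}\,\wtilde{U}_1\wtilde{U}_2$ and each $\wtilde{U}_i$ is an extension of $V_i$, with the compressions
\[
   V_1^nV_2^m=P_{\mcl{H}}\wtilde{U}_1^n\wtilde{U}_2^m|_{\mcl{H}},
   \qquad
   V_2^nV_1^m=P_{\mcl{H}}\wtilde{U}_2^n\wtilde{U}_1^m|_{\mcl{H}}
\]
for all $n,m\geq 0$.

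Now set $U_i:=\wtilde{U}_i^*\in\B{\mcl{K}}$ for $i=1,2$. Then $U_i$ is a unitary, and because $\wtilde{U}_i|_{\mcl{H}}=V_i=W_i^*$, we have $U_i^*|_{\mcl{H}}=W_i^*$, i.e., $U_i$ is a (unitary) lifting of $W_i$. Taking adjoints in $\wtilde{U}_2\wtilde{U}_1=\ol{Q}\wtilde{U}_1\wtilde{U}_2$ gives $U_1U_2=U_2U_1\ol{Q}^*$, and multiplying on the right by the unitary $\ol{Q}$ yields
\[
   U_2U_1=U_1U_2\ol{Q},
\]
which is (i). Finally, taking adjoints in the two compression identities above (and using $(P_{\mcl{H}}A|_{\mcl{H}})^*=P_{\mcl{H}}A^*|_{\mcl{H}}$) gives
\[
   W_2^mW_1^n=P_{\mcl{H}}U_2^mU_1^n|_{\mcl{H}},
   \qquad
   W_1^mW_2^n=P_{\mcl{H}}U_1^mU_2^n|_{\mcl{H}},
\]
which is (ii). The form $\ol{Q}=Q\oplus qI_{\mcl{K}\ominus\mcl{H}}$ is inherited directly from Theorem \ref{thm-Q-iso-uni}.

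There is no real obstacle here: the entire argument is adjoint bookkeeping, and the only point requiring a moment's care is the order of factors after adjointing, specifically verifying that the $\ol{Q}$ produced for the isometric problem is the same $\ol{Q}$ that works on the co-isometric side (which is immediate because $\ol{Q}$ is unitary, so $U_1U_2=U_2U_1\ol{Q}^*$ and $U_2U_1=U_1U_2\ol{Q}$ are equivalent).
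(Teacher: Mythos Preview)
Your proof is correct and follows essentially the same approach as the paper: take adjoints to convert the co-isometric hypothesis $W_2W_1=W_1W_2Q$ into the isometric relation $W_2^*W_1^*=QW_1^*W_2^*$, apply Theorem~\ref{thm-Q-iso-uni}, and then take adjoints again. Your write-up is simply more explicit about the bookkeeping (naming the intermediate unitaries $\wtilde{U}_i$ and verifying the compression identities), whereas the paper compresses all of this into two sentences.
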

  
\begin{proof}
 Since $W_i^*$'s are isometries satisfying $W_2^*W_1^*=QW_1^*W_2^*$, from  above theorem there exists Hilbert space $\mcl{K}\supseteq\mcl{H}$ and unitaries $U_1,U_2\in\B{\mcl{K}}$ such that $U_i^*$'s are extensions of $W_i^*$'s with $U_2^*U_1^*=(Q\oplus qI_{\mcl{H}^\perp})U_1^*U_2^*$. 
\end{proof} 
 
 Combining Theorems \ref{thm-Q-CID}, \ref{thm-Q-iso-uni} and Corollaries \ref{cor-QCD}, \ref{cor-Q-coiso-uni} we have the following analogue of Ando's theorem for $Q$-commuting contractions. 
  
\begin{thm}[$Q$-commuting unitary dilation]\label{thm-main}
 Let $T_1,T_2\in\B{\mcl{H}}$ be contractions and $Q\in\B{\mcl{H}}$ be a unitary such that $T_2T_1=QT_1T_2$ (respectively $T_2T_1=T_1T_2Q$). Then there exist Hilbert space $\mcl{K}\supseteq\mcl{H}$ and unitaries $\ol{Q},U_1,U_2\in\B{\mcl{K}}$ such that 
 \begin{enumerate}[label=(\roman*)]
     \item $U_2U_1=\ol{Q}U_1U_2$ (respectively $U_2U_1=U_1U_2\ol{Q}$); and
     \item $T_1^nT_2^m=P_\mcl{H}U_1^nU_2^m|_{\mcl{H}}$ and $T_2^nT_1^m=P_\mcl{H}U_2^nU_1^m|_{\mcl{H}}$ for all $n,m\geq 0$. 
 \end{enumerate}
 In fact, given $q\in\mbb{T}$ we can choose $\ol{Q}=Q\oplus qI_{\mcl{K}\ominus\mcl{H}}$.
\end{thm}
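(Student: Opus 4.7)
The plan is to assemble Theorem \ref{thm-main} as a two-stage composition of the dilation theorems already established, exactly as the transition sentence preceding the statement suggests. Fix $q\in\mbb{T}$. For case (i), where $T_2T_1=QT_1T_2$, I first invoke Theorem \ref{thm-Q-CID} to obtain a Hilbert space $\mcl{K}_1\supseteq\mcl{H}$, isometric liftings $V_1,V_2\in\B{\mcl{K}_1}$ of $T_1,T_2$, and the unitary $\ol{Q}_1:=Q\oplus q I_{\mcl{K}_1\ominus\mcl{H}}\in\B{\mcl{K}_1}$ satisfying $V_2V_1=\ol{Q}_1V_1V_2$. I then apply Theorem \ref{thm-Q-iso-uni} to the isometries $V_1,V_2$ together with the unitary $\ol{Q}_1$: this produces a further Hilbert space $\mcl{K}\supseteq\mcl{K}_1$, unitary extensions $U_1,U_2\in\B{\mcl{K}}$ of $V_1,V_2$, and the unitary $\ol{Q}:=\ol{Q}_1\oplus qI_{\mcl{K}\ominus\mcl{K}_1}$ satisfying $U_2U_1=\ol{Q}U_1U_2$. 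Since both layers outside $\mcl{H}$ are the same scalar multiple of the identity, this operator $\ol{Q}$ coincides with $Q\oplus qI_{\mcl{K}\ominus\mcl{H}}$, which is the form claimed in the theorem.

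For case (ii), where $T_2T_1=T_1T_2Q$, I mirror the argument using the dual route: first Corollary \ref{cor-QCD} furnishes co-isometric extensions $W_1,W_2\in\B{\mcl{K}_1}$ of $T_1,T_2$ with $W_2W_1=W_1W_2\ol{Q}_1$, and then Corollary \ref{cor-Q-coiso-uni} upgrades these to unitary liftings $U_1,U_2\in\B{\mcl{K}}$ of $W_1,W_2$ satisfying $U_2U_1=U_1U_2\ol{Q}$, with $\ol{Q}$ again of the form $Q\oplus qI_{\mcl{K}\ominus\mcl{H}}$.

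It remains to verify the dilation identities (ii) of the statement. In case (i), because $U_i$ is an \emph{extension} of the isometry $V_i$, the subspace $\mcl{K}_1$ is invariant under each $U_i$ and $U_i^n|_{\mcl{K}_1}=V_i^n$ for all $n\geq 0$; hence for $h\in\mcl{H}$ one has $U_1^nU_2^mh=V_1^nV_2^mh$, and since each $V_i$ is a lifting of $T_i$ on $\mcl{K}_1$, the remarks at the start of Section 2.1 give $P_{\mcl{H}}V_1^nV_2^m|_{\mcl{H}}=T_1^nT_2^m$. Composing the projections $P_{\mcl{H}}=P_{\mcl{H}}^{\mcl{K}_1}\circ P_{\mcl{K}_1}^{\mcl{K}}$ yields $P_{\mcl{H}}U_1^nU_2^m|_{\mcl{H}}=T_1^nT_2^m$, and similarly with the roles of $1$ and $2$ swapped. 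Case (ii) is handled symmetrically: the $U_i$ are liftings of the $W_i$, so $\mcl{K}_1^\perp$ is invariant under each $U_i$, while the $W_i$ extend the $T_i$ on $\mcl{H}$; chasing through the resulting block-triangular matrix form of each $U_i^n$ shows again that $P_{\mcl{H}}U_1^nU_2^m|_{\mcl{H}}=T_1^nT_2^m$.

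There is essentially no obstacle in this proof beyond careful bookkeeping. The only point that needs a brief check is that the two successive enlargements of the $\ol{Q}$ operator—by a $qI$ block at each stage—compose into the single operator $Q\oplus qI_{\mcl{K}\ominus\mcl{H}}$ on $\mcl{K}$, and that the extension/lifting directions at each layer are compatible so that the dilation formulas telescope correctly. Both verifications are routine.
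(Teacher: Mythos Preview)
Your proposal is correct and follows exactly the route the paper indicates: the paper's proof of Theorem~\ref{thm-main} consists solely of the sentence ``Combining Theorems~\ref{thm-Q-CID}, \ref{thm-Q-iso-uni} and Corollaries~\ref{cor-QCD}, \ref{cor-Q-coiso-uni}\dots'', and you have carefully filled in precisely those details, including the check that the two layers of $qI$ blocks compose to give $\ol{Q}=Q\oplus qI_{\mcl{K}\ominus\mcl{H}}$ and that the lifting/extension directions at each stage telescope into the required dilation formulas.
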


\begin{rmk}
 If $Q=qI_{\mcl{H}}$ for some $q\in\mbb{T}$, then Theorem \ref{thm-main} reduces to $q$-commuting dilation theorem. 
\end{rmk}

\begin{ackn}
  The first author thanks the Department of Atomic Energy (DAE), Government of India for financial support and IMSc Chennai for providing necessary facilities to carry out this work. We thank B. V. R. Bhat for helpful suggestions.
\end{ackn}

%

%
%


\end{document}